\def\namedlabel#1#2{\begingroup  
    #2%
    \def\@currentlabel{#2}%
    \phantomsection\label{#1}\endgroup
}
\newtheorem{theorem}{Theorem}[section]
\newtheorem{definition}[theorem]{Definition}
\newtheorem{corollary}[theorem]{Corollary}
\newtheorem{lemma}[theorem]{Lemma}
\newtheorem{remark}[theorem]{Remark}
\numberwithin{equation}{section}
\newcommand {\rd}{\mathrm{d}}
\newcommand{\eps}{\varepsilon}
\newcommand{\R}{{\mathbb{R}}}
\newcommand{\N}{{\mathbb{N}}}
\renewcommand{\P}{{\mathbb{P}}}
\newcommand{\E}{{\mathbb{E}}}
\newcommand{\loiinit}{\nu_0}
\renewcommand{\O}{\mathcal{O}}
\renewcommand{\L}{\mathcal{L}}
\newcommand{\Cov}{\mathrm{Cov}}
\newcommand*{\transp}[2][-3mu]{\ensuremath{\mskip1mu\prescript{\smash{\mathrm t\mkern#1}}{}{\mathstrut#2}}}%
\begin{document}
\title{Asymptotic behavior of a network of  neurons with random linear interactions}

\author[$\dagger$, $\star$]{Olivier Faugeras}
\author[$\dagger$, $\star$]{\'{E}milie Soret}
\author[$\star$]{Etienne Tanr\'{e}}
\affil[$\dagger$]{ Universit\'e C\^ote d’Azur, INRIA, France (team Mathneuro).}
\affil[$\star$]{ Universit\'e C\^ote d’Azur, INRIA, France (team Tosca).}

\maketitle

\begin{abstract}
We study the asymptotic behavior for asymmetric neuronal dynamics in a network 
of linear Hopfield neurons. 
The interaction between the neurons is modeled  by random 
couplings which are centered \textit{i.i.d.} random variables with finite 
moments of all orders. We prove that if the initial condition of the network is 
a set of  \textit{i.i.d.} random variables 
and independent of the synaptic weights, each component of the limit system is 
described as the sum of the corresponding coordinate of the  initial condition 
with a centered Gaussian process whose covariance function can be described in 
terms of a modified Bessel function. This process is not Markovian. The 
convergence is in law almost surely with respect to the random weights. Our method is 
essentially based on  the method of moments to obtain a  Central Limit Theorem.
\end{abstract}

\noindent
{\em AMS Subject of Classification (2020)}:\\
 60F10, 60H10, 60K35, 82C44, 82C31, 82C22, 92B20
\section{Introduction}\label{Sec:Intro}
We revisit the problem of characterizing the limit of a network of 
Hopfield neurons. Hopfield \cite{hopfield:82} defined a large class of 
neuronal networks and characterized some of their computational
properties \cite{hopfield:84,hopfield-tank:86}, i.e. their ability to 
perform computations. Inspired by his work, Sompolinsky and 
co-workers studied the thermodynamic limit of these networks when 
the interaction term is linear \cite{crisanti-sompolinsky:87} using the 
dynamic mean-field theory developed in 
\cite{sompolinsky-zippelius:82} for symmetric spin glasses. The 
method they use is a functional integral formalism used in particle 
physics and produces the self-consistent mean-field equations of the 
network. This was later extended to the case of a nonlinear interaction 
term, the nonlinearity being an odd sigmoidal function 
\cite{sompolinsky-crisanti-etal:88}. Using the same formalism the 
authors established the self-consistent mean-field equations of 
the network and the dynamics of its solutions which featured a chaotic 
behavior for some values of the network parameters. A little later the 
problem was picked up again by mathematicians. Ben Arous and 
Guionnet applied large deviation techniques to study the
thermodynamic limit of a network of spins interacting linearly with  
\textit{i.i.d.} centered Gaussian weights. The intrinsic spin dynamics 
(without interactions) is a stochastic differential equation. They prove 
that the annealed (averaged) law of the empirical measure satisfies a 
large deviation principle and that the good rate function of this large 
deviation principle achieves its minimum value at a unique non 
Markovian measure~\cite{guionnet:95,ben-arous-guionnet:95,guionnet:97}. 
They also prove averaged propagation of chaos results. Moynot and 
Samuelides \cite{moynot-samuelides:02} adapt their work to the case 
of a network of Hopfield neurons with a nonlinear interaction term, the 
nonlinearity being a sigmoidal function, and prove similar results in the 
case of discrete time. The intrinsic neural dynamics is the gradient of a 
quadratic potential. Our work is in-between that of Ben Arous and 
Guionnet and Moynot and Samuelides: we consider a network of 
Hopfield neurons, hence the intrinsic dynamics is simpler than the 
one in Ben Arous and Guionnet's case, with linear interaction between 
the neurons, hence simpler than the one in Moynot and Samuelides' 
work. We do not make the hypothesis that the interaction (synaptic) 
weights are Gaussian unlike the previous authors. The equations of our 
network are linear and therefore their solutions can be expressed 
analytically. As a consequence of this, we are able to use variants of the CLT and 
the 
moments method to characterize in a simple way the thermodynamic 
limit of the network without the tools of the theory of large deviations. 
Our main result is that the solution to the network equations converges 
in law toward a non Markovian process, sum of the initial condition and 
a centered Gaussian process whose covariance is characterized by a 
modified Bessel function. 

\subsection*{Plan of the paper}
We introduce the precise model in Section~\ref{sec:model}. In 
Section~\ref{Sec:MainResult- without noise case}, we state and prove 
our main result (Theorem~\ref{thm:cv}) on the asymptotic behavior of 
the dynamics in the absence of additive white noise. 
Section~\ref{Sec:Noise} is devoted to the general case with additive 
white noise 
	(Theorem~\ref{thm:cvnoise}). Our approach in establishing these results is 
	``syntactic'', based on Lemmas~\ref{lem:1} and \ref{lem:nisodd}.

\section{Network model}\label{sec:model}
We consider a network of $N$ neurons in interaction. Each neuron $i
\in \{1,\cdots,N\}$ is characterized by its membrane potential 
$(V^{i,(N)}(t))_t$  where $t\in \R_+$ represents  the time. The 
membrane potentials  evolve according to the system of stochastic 
differential equations
\begin{equation}\label{eq:EDS}
\left\lbrace
\begin{aligned}
V^{i,(N)}(t)&=V_0^i-\lambda\int_0^t V^{i,(N)}(s) \rd s +\frac{1}{\sqrt{N}}\sum_{j=1}^N \int_0^tJ_{i,j}^{(N)}V^{j,(N)}(s)\rd s + \gamma  B^{i}(t), \quad \forall i\in \{1,\cdots,N\}\\
\L(V_0^{(N)})&=\loiinit^{\otimes N},
\end{aligned}
\right.
\end{equation}
where $V_0^{(N)}=\left(V^{1}_0,\cdots,V^{N}_0\right)$ is the vector of 
initial conditions. The matrix $J^{(N)}$ is a square 
matrix of size $N$ and contains the synaptic weights. For $i\neq j$, 
the coefficient $J^{(N)}_{i,j}/\sqrt{N}$ 
represents the synaptic 
weight for pre-
synaptic neuron $j$ to post-synaptic neuron $i$. 
	The coefficient $J^{(N)}_{i,i}/\sqrt{N}$ can be seen as describing the interaction 
	of the neuron $i$ with itself. It turns out that it has no role in defining the mean 
	field limit. The parameters
$\lambda$ and $\gamma$ are constants. The $(B^{i}(t))_t$, $i\in 
\{1,\cdots,N\}$ are $N$ independent standard Brownian motions 
modelling the internal noise of each neuron. The initial condition is a 
random vector with \textit{i.i.d.} coordinates, each of distribution 
$\loiinit$. 

We denote by  \(V^{(N)}(t)\) the vector \((V^{1,(N)}(t),\cdots,V^{N,(N)}(t))\). Hence, we can write the  system~\eqref{eq:EDS} 
in  matrix form:
\begin{equation}\label{eq:eds_matrix}
\left\lbrace
\begin{aligned}
V^{(N)}(t) &=V_0^{(N)}-\int_0^t\lambda V^{(N)}(s) \rd s 
+\int_0^t\frac{J^{(N)}}{\sqrt{N}}V^{(N)}(s)\rd s+ \gamma B(t)\\
\L\left(V_0^{(N)}\right)&=\loiinit^{\otimes N}.
\end{aligned}
\right.
\end{equation}
System~\eqref{eq:eds_matrix} can be solved explicitly and its solution $V^{(N)}(t)$ is given by 
\begin{equation}\label{eq:Vtsoluceexplicit}
V^{(N)}(t)=e^{-\lambda 
t}\left[\exp\left(\frac{J^{(N)}}{\sqrt{N}}t\right)V_0^{(N)}+\gamma\int_0^t e^{\lambda 
s}\exp\left(\frac{J^{(N)}}{\sqrt{N}}(t-s)\right)\rd B(s)\right], \quad \forall t\in \R_+.
\end{equation}
For the rest of the paper, we make the following hypotheses on the distributions of 
$V_0^{(N)}$ and $J^{(N)}$.
\begin{description}
\item[\namedlabel{itm:H1}{(H1)} ]  $\loiinit$  is of compact support and we note
\[
\mu_0 := \int_\R x \, d\loiinit(x) \quad \text{and} \quad \phi_0 = \int_\R x^2 \, d\loiinit(x),
\]
 	its first and second order moments.
\item[\namedlabel{itm:H2}{(H2)} ]The elements of the  
matrix $J^{(N)}$ are \textit{i.i.d.} 
centered  and bounded random variables  of variance  $\sigma^2$. They are 
independent of the initial condition.
\end{description}

\section{Convergence of the particle system without additive Brownian noise (\(\gamma = 0\))}\label{Sec:MainResult- without noise case}
In this section, we consider the model without any additive noise, that is  
$\gamma=0$ in \eqref{eq:EDS}: The unique source of 
randomness in the dynamics comes from the random matrices 
$(J^{(N)})$ describing the synaptic weights, and the initial condition. 
\subsection{Mean field limit}\label{Subsec:meanfield}
 The following result describes the convergence  when $N\to +\infty$  of the coordinates of the vector 
$(V^{(N)}(t))_{t\in \R_+}$ to a Gaussian process whose 
covariance is determined by a Bessel function. Theorem~\ref{thm:cv} below can be seen as a kind of mean-field description 
of~\eqref{eq:EDS} as the number of neurons tends to infinity. 

\begin{theorem}\label{thm:cv}
Under the hypotheses \ref{itm:H1} and \ref{itm:H2}, for each $k\in \N^*$, the process 
$(V^{k,(N)}(t))_{t\in \R_+}$ converges in law to $(V^{k,(\infty)}(t))_{t\in \R_+}$ where, 
\[
V^{k,(\infty)}(t)=e^{-\lambda t}\left[V_0^{k}+Z^{k}(t)\right], \qquad \forall t\in \R_+.
\]
The process \(\left(Z^k(t)\right)_{t\in \R_+}\) is a centered Gaussian process starting from $0$ ( $Z^k(0)=0$) such that
\begin{equation}\label{eq:I0tilde}
\mathbb{E}\left[Z^k(t)Z^k(s)\right] = \phi_0 \widetilde{I}_0(2\sigma \sqrt{st}), \qquad \text{where }\quad 
\widetilde{I}_0(z)=\sum_{\ell\geq 1} z^{2\ell}/(2^{2\ell}(\ell !)^2).
\end{equation}

Moreover, for all $t\in \R_+$, $Z^{k}(t)$ is independent of $V_0^k$.
\end{theorem}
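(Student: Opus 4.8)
The plan is to exploit the fact that, when $\gamma=0$, the solution~\eqref{eq:Vtsoluceexplicit} reduces to $V^{(N)}(t)=e^{-\lambda t}\exp(J^{(N)}t/\sqrt N)V_0^{(N)}$, which I can expand as a convergent power series in the matrix $J^{(N)}/\sqrt N$. Reading off the $k$-th coordinate and separating the $n=0$ term of the exponential, I write
\[
V^{k,(N)}(t)=e^{-\lambda t}\Big[V_0^k+Z^{k,(N)}(t)\Big],\qquad Z^{k,(N)}(t):=\sum_{n\ge1}\frac{t^n}{n!\,N^{n/2}}\sum_{j=1}^N\big[(J^{(N)})^n\big]_{k,j}\,V_0^j .
\]
Since $V_0^k$ already appears explicitly, the whole problem is to show that the field $Z^{k,(N)}$ converges in law to the centered Gaussian process $Z^k$ with covariance $\phi_0\widetilde I_0(2\sigma\sqrt{st})$ and that it becomes asymptotically independent of $V_0^k$. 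Following the announced strategy, I would prove convergence of the finite-dimensional distributions by the method of moments: for times $t_1,\dots,t_d$ and exponents $p_1,\dots,p_d$, compute $\mathbb E\big[\prod_\alpha Z^{k,(N)}(t_\alpha)^{p_\alpha}\big]$ (the expectation being over both $J^{(N)}$ and $V_0^{(N)}$) and show it converges to the corresponding moment of a Gaussian vector, given by the Wick/Isserlis formula applied to $\phi_0\widetilde I_0(2\sigma\sqrt{t_\alpha t_\beta})$.

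The combinatorial heart of the argument is the expansion of the matrix powers as sums over directed walks,
\[
\big[(J^{(N)})^n\big]_{k,j}=\sum_{i_1,\dots,i_{n-1}=1}^N J_{k,i_1}^{(N)}J_{i_1,i_2}^{(N)}\cdots J_{i_{n-1},j}^{(N)} .
\]
A mixed moment of the $Z^{k,(N)}(t_\alpha)$ is then a sum over a family of such walks, all issued from $k$, weighted by a product of $J^{(N)}$ entries and by a product of the $V_0^{j}$ at the walk endpoints, with an overall factor $N^{-(\sum_r n_r)/2}$. Because the entries of $J^{(N)}$ are i.i.d., centered, independent of $V_0^{(N)}$, and the directed edges $(a,b)$ and $(b,a)$ carry independent weights, the expectation over $J^{(N)}$ vanishes unless every directed edge of the superimposed walks is traversed an even number of times. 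This is exactly the mechanism behind Lemma~\ref{lem:nisodd}: a configuration with an odd total number of $J$-factors (or, more generally, an edge of odd multiplicity) contributes $0$, which forces the surviving moments to be even. The remaining task is to weigh each admissible pairing against the number of free summation indices: a configuration spanning $v$ distinct vertices besides $k$ contributes a factor $\asymp N^{v}$ from the index sums, to be compared with the prefactor $N^{-(\sum_r n_r)/2}$, and Lemma~\ref{lem:1} is the bookkeeping that identifies which pairings are of leading order $N^0$ and which are $O(1/N)$.

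For the covariance I would specialize this to $\mathbb E[Z^{k,(N)}(t)Z^{k,(N)}(s)]$, a double sum over a walk of length $n$ and a walk of length $m$. With each edge forced to even multiplicity, the dominant configuration is $n=m=\ell$ with the two walks identical and passing through $\ell$ pairwise distinct fresh vertices: the two endpoints then coincide ($j=j'$), the $V_0$ factor is $\mathbb E[(V_0^j)^2]=\phi_0$, each of the $\ell$ distinct edges is used twice giving $\sigma^{2\ell}$, and there are $\asymp N^\ell$ such walks, so after the prefactor $N^{-\ell}$ this term converges to $\phi_0(\sigma^2 st)^\ell/(\ell!)^2$. Every other configuration — unequal lengths $n\neq m$, walks that self-intersect or reuse a vertex, and any use of a diagonal entry $J^{(N)}_{i,i}$ — spans strictly fewer than $\ell$ free vertices and is therefore $O(1/N)$. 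Summing over $\ell\ge1$ reproduces $\phi_0\widetilde I_0(2\sigma\sqrt{st})$. The same ``only perfect pairings of distinct walks survive'' principle makes the higher even moments factor into sums of products of pair covariances, i.e.\ the Wick formula, so the limit is Gaussian; and since the $n\ge1$ contributions to $\mathbb E[Z^{k,(N)}(t)]$ all carry a spare factor $N^{-1}$, the limit is centered with $Z^k(0)=0$.

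Finally, for the independence of $Z^k(t)$ and $V_0^k$ I would compute the mixed moments $\mathbb E\big[(V_0^k)^p\,Z^{k,(N)}(t)^q\big]$ and observe that, in any leading-order admissible configuration, the walk endpoints $j_1,\dots,j_q$ lie on fresh vertices distinct from $k$, so the corresponding $V_0^{j_r}$ are independent of $V_0^k$ (the terms in which a walk returns to $k$ are subleading for the same vertex-counting reason); hence the moments factor as $\mathbb E[(V_0^k)^p]\,\mathbb E[Z^{k,(N)}(t)^q]+o(1)$, which yields independence in the limit. I expect the main obstacle to be precisely the uniform control of the walk combinatorics in Lemma~\ref{lem:1}: one must show that across all $n,m$ — the series being infinite — the non-pairing and vertex-collapsing configurations are summably $O(1/N)$, so that the interchange of the limit with the infinite sums over walk lengths is legitimate. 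A secondary point, needed to upgrade the finite-dimensional convergence to convergence in law of the process, is tightness, which I would obtain from a Kolmogorov criterion using the moment bounds $\mathbb E[|Z^{k,(N)}(t)-Z^{k,(N)}(s)|^{2p}]\le C_p|t-s|^{p}$ that the same expansion provides.
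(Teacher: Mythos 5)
Your overall strategy coincides with the paper's: expand the matrix exponential into the series \eqref{eq:Vt1expand}, analyze the normalized terms $N^{-\ell/2}U_\ell^{(N)}$ by the method of moments over walks (directed edges must have even multiplicity, then count free vertices against the $N^{-n\ell/2}$ prefactor), and get independence from $V_0^k$ because leading configurations end at fresh vertices. That is exactly the content and the proof mechanism of Lemma~\ref{lem:1}. The genuine gap is the step you yourself flag as ``the main obstacle'' and then leave unresolved: you propose to run the moment method \emph{directly on the full series} $Z^{k,(N)}(t)=\sum_{n\geq 1}\frac{t^n}{n!}N^{-n/2}\transp{e_k}(J^{(N)})^nV_0$, so every mixed moment you must control is an infinite sum over word lengths $(n_1,\dots,n_p)$ of configuration sums, and passing $N\to\infty$ inside that infinite sum requires domination that is uniform in $N$ and summable in the lengths --- concretely, bounds of the form $\sup_N\E\bigl[|N^{-\ell/2}U_\ell^{(N)}|^{2q}\bigr]\leq C_q R^\ell$ with $R$ geometric in $\ell$, for every $q$. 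The configuration counting you describe does not give this: the number of equivalence classes of sentences grows super-geometrically in $\ell$, and the $O(1/N)$ corrections carry $\ell$- and $q$-dependent constants, so obtaining such uniform bounds is a F\"uredi--Koml\'os-type estimate, i.e.\ substantial extra work rather than bookkeeping. Without it, neither the convergence of the moments of $Z^{k,(N)}(t)$ nor the Wick factorization of the higher moments of the series is justified.

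The paper is structured precisely to avoid this. It never computes moments of the infinite series: it truncates, $Z_n^{(N)}(t)=\sum_{\ell=1}^n\frac{t^\ell}{\ell!}N^{-\ell/2}U_\ell^{(N)}$ as in \eqref{eq:ZandU}, applies Lemma~\ref{lem:1} to the \emph{finite} vector $(N^{-\ell/2}U_\ell^{(N)})_{\ell\leq n}$ to get a Gaussian limit $Z_n(t)$ with variance $\phi_0\widetilde{I}_{0,n}(2\sigma t)$, lets $n\to\infty$ via the Kolmogorov--Khinchin theorem, and then closes the diagram with \cite[Th.~25.5]{billingsley1995}: it suffices to verify \eqref{eq:limnlimsupN}, which follows from Markov's inequality and a bound on $\limsup_N\E[|N^{-\ell/2}U_\ell^{(N)}|]$ for each tail term --- a per-$\ell$, first-moment estimate, far cheaper than the all-moments uniform control your route needs. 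If you reorganize your argument this way, your combinatorial work \emph{is} Lemma~\ref{lem:1} and the proof goes through. Two smaller corrections: (i) the mechanism behind Lemma~\ref{lem:nisodd} is not that odd edge multiplicities kill odd moments (configurations with all edges of even multiplicity do exist when $n$ is odd); it is the vertex-count bound $wt(\mathbf{j})\leq\lfloor n\ell/2\rfloor$, which makes those configurations subleading after normalization; (ii) the tightness/Kolmogorov-criterion step you add at the end is not needed to match what the paper actually proves --- the convergence established there is that of finite-dimensional distributions (Gaussianity of linear combinations $aZ^1(t)+bZ^1(s)$, whence the covariance $\phi_0\widetilde{I}_0(2\sigma\sqrt{st})$), not weak convergence in path space.
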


\begin{remark}
 The function $\widetilde{I}_0$ is closely connected to
  the modified Bessel function of the first kind \(I_0\), defined as a solution of the ordinary differential equation $z^2y''+zy'-z^2y=0$, $y'=dy/dz$. This function is the 
sum of the series $(z^{2\ell}/(2^{2\ell}(\ell !)^2))_{\ell \geq 0}$ which is absolutely convergent for all $z \in \mathbb{C}$, i.e.: 
$I_0(z)=\sum_{\ell\geq 0} (z/2)^{2\ell}/(\ell !)^2$, 
so that we have
\[\widetilde{I}_0(z)=I_0(z)-1.\]
\end{remark}
The proof of Theorem~\ref{thm:cv} requires the following lemma.
\begin{lemma}\label{lem:1}
Let $J$ be an infinite matrix such that its finite restrictions \(J^{(N)}\) satisfy
\ref{itm:H2} and consider a sequence of 
 bounded
(not necessarily identically distributed)
random variables $\left(Y_{j}\right)_{j\in \N^*}$, independent of \(J\).

Assume that, almost surely,
\begin{equation}\label{eq:assumptionV0}
\lim_{N \to \infty}  \frac{1}{N} \sum_{j=1}^N Y_j^2 :=\phi<+\infty.
\end{equation}
 For all $\ell\in \N^*$ and for all $1\leq k\leq N$, define
 \[U_\ell^{k,(N)}:=\transp{e_k}\left((J^{(N)})^\ell Y^{(N)}\right),\]
 where $e_k$ is the $k$-th vector of the standard basis of $\R^N$.
Then, for all $1 \leq \ell_1<\cdots<\ell_m$, $m \in \N^*$,  the vector 
\[
\left(\frac{1}{\sqrt{N}^{\ell_1}}U_{\ell_1}^{k,(N)}, \cdots , 
\frac{1}{\sqrt{N}^{\ell_m}}U_{\ell_m}^{k,(N)}\right)
\]
converges in law as $N\to +\infty$, to an $m$-dimensional Gaussian random vector of diagonal 
covariance matrix $\mathrm{diag}(\sigma^{2\ell_i}\phi)$ independent of any finite 
subset of the sequence $\left(Y_{j}\right)_{j\in \N^*}$.
\end{lemma}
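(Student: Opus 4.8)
The plan is to establish the convergence by the method of moments, computing the joint moments of the rescaled variables with respect to $J$ only, conditioning on the sequence $(Y_j)$. This is legitimate since $J$ and $(Y_j)$ are independent and, on the almost-sure event of \eqref{eq:assumptionV0}, the $Y_j$ may be treated as fixed constants with $\tfrac1N\sum_{j=1}^N Y_j^2 \to \phi$. Writing $U_\ell^{k,(N)} = \sum_{j_1,\dots,j_\ell} J_{k j_1}J_{j_1 j_2}\cdots J_{j_{\ell-1}j_\ell}\, Y_{j_\ell}$ exhibits each $U_\ell^{k,(N)}$ as a sum over directed walks of length $\ell$ issued from $k$. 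Expanding a general mixed moment $\mathbb{E}_J\big[\prod_{r=1}^m (N^{-\ell_r/2} U_{\ell_r}^{k,(N)})^{p_r}\big]$ produces a sum over $n:=\sum_r p_r$ such walks, and since the entries of $J$ are independent, centered and bounded, only those terms survive in which every directed edge $(i,j)$ (that is, every entry $J_{ij}$) occurs at least twice; boundedness guarantees that all entrywise moments are $O(1)$, so each surviving term is a bounded constant.

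I would then run a graph/power-counting argument. Encode a surviving term by the multigraph on the realized indices whose directed edges are those traversed by the walks; let $v$ be its number of distinct vertices, $e$ its number of distinct directed edges, and $L:=\sum_r p_r \ell_r$ the total number of edge slots. Since every edge is repeated, $2e\le L$; since all walks start at $k$ the graph is connected, whence $v\le e+1\le L/2+1$. Summing the free indices contributes $N^{v-1}$ (the root $k$ is fixed, and every endpoint sum $\sum_j Y_j^{\,d}$ is $O(N)$ because the $Y_j$ are bounded), so after the normalization $N^{-L/2}$ each term is $O(N^{(v-1)-L/2})=O(1)$, and the dominant contributions are exactly the configurations with $v-1=e=L/2$: the underlying graph is a tree in which each directed edge occurs exactly twice.

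The directedness of $J$ then forces a rigid structure, and this is the main obstacle. In such a tree, every pendant edge at a non-root leaf $z$ must be oriented from its parent towards $z$: the opposite orientation is impossible, since a walk leaving $z$ along an edge $z\to\cdot$ would have to have entered $z$, yet $z$ has no other edge, so the walk could only start at $z$, contradicting $z\ne k$. Peeling off leaves, all tree edges are oriented away from $k$, so each walk is a non-backtracking descent from the root, i.e. the unique tree path from $k$ to its endpoint. Finally, among $k$'s subtrees, the edge into each subtree is used exactly twice, so exactly two walks enter it; being the only walks there, and having to cover each subtree edge exactly twice, they traverse identical edge sets and hence are the same root-to-vertex path. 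Thus the dominant configurations are precisely the perfect matchings of the walks into identical pairs.

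This identification yields the result. Two matched walks necessarily have equal length, so a nonzero limit requires every $p_r$ to be even, while each matched pair of length-$\ell_r$ walks is summed over $N^{\ell_r-1}$ interior indices and one endpoint index carrying the weight $Y^2$, contributing $\sigma^{2\ell_r}\,\tfrac1N\sum_j Y_j^2\to\sigma^{2\ell_r}\phi$ after normalization; distinct pairs occupy disjoint subtrees, so the contributions factor. Counting the $(p_r-1)!!$ matchings within each length class, the moment converges to $\prod_{r}(p_r-1)!!\,(\sigma^{2\ell_r}\phi)^{p_r/2}$, which is exactly $\prod_r \mathbb{E}[G_r^{p_r}]$ for independent $G_r\sim\mathcal{N}(0,\sigma^{2\ell_r}\phi)$; in particular the limiting covariance is diagonal, since a matched pair never mixes two different lengths. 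As the Gaussian law is determined by its moments, the method of moments gives the claimed convergence in law. Independence from any finite subset $\{Y_{j_1},\dots,Y_{j_q}\}$ follows from the same computation: inserting a bounded function of these finitely many $Y_j$'s perturbs the endpoint sums only on an $O(1/N)$ fraction of indices, so in the limit the joint moments factor, giving the asymptotic independence.
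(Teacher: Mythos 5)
Your proof is correct, and its skeleton is the same as the paper's: method of moments, expansion of powers of $U_{\ell}^{k,(N)}$ over walks rooted at $k$, the remark that centered independent entries kill every term in which some entry of $J$ appears only once, and the power count $v\le e+1\le L/2+1$ isolating as dominant the tree configurations whose directed edges are each traversed exactly twice (the paper's reduction to sentences of weight $\lfloor n\ell/2\rfloor+1$). The genuine difference is in how the dominant configurations are analyzed. The paper's combinatorial engine is Lemma~\ref{lem:nisodd} (extended in Lemma~\ref{lem:nisodd1}), proved by a letter-counting case analysis, and it only yields the vanishing of odd moments; the description of the surviving even-moment configurations (elements of $\mathcal{A}_{\ell,2p,p\ell+1}$ as $p$ pairs of identical words whose supports meet only at $\{1\}$) is asserted there rather than proved. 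Your orientation/leaf-peeling argument proves exactly that description --- a pendant edge at a non-root leaf must point toward the leaf, hence every tree edge points away from the root, hence every walk is the unique descending path to its endpoint and the walks pair into identical copies --- and it delivers in one stroke both the vanishing of the mixed moments when some $p_r$ is odd and the $(p_r-1)!!$ pairing count for even moments. Two further deviations are improvements in completeness: you compute the mixed moments $\E\big[\prod_r (N^{-\ell_r/2}U_{\ell_r}^{k,(N)})^{p_r}\big]$ directly, so the diagonality of the limiting covariance falls out of the same argument (a matched pair never mixes two lengths), whereas the paper treats $m=1$ in detail and only sketches $m>1$; and you work conditionally on $(Y_j)$, using the almost-sure hypothesis \eqref{eq:assumptionV0} pointwise, whereas the paper averages over $Y$ and finishes with dominated convergence. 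The conditioning also makes the claimed independence from any finite subset of the $(Y_j)$ nearly automatic, since the conditional limit law does not depend on $Y$; to make your final sentence airtight, rather than the ``$O(1/N)$ fraction'' remark, simply integrate the conditional moments (or the conditional characteristic function, which converges a.s. to a deterministic Gaussian one) against a bounded function of $Y_{j_1},\dots,Y_{j_q}$ and apply dominated convergence.
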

\begin{remark}\label{rem:ind}
The hypothesis that the $Y_j$s are bounded is not the only possible one. The Lemma is also true for independent $Y_j$s with finite moments of all orders.
\end{remark}
  We give one corollary of this Lemma.
\begin{corollary}\label{coro:pcomponents}
Under the same assumptions as in Lemma~\ref{lem:1},
 for all integers $p > 1$ and  $1 \leq k_1 < k_2 < \cdots < k_p$ the vector
 \begin{multline*}
\bigg(\frac{1}{\sqrt{N}^{\ell_1}}U_{\ell_1}^{k_1,(N)}, \cdots , 
\frac{1}{\sqrt{N}^{\ell_m}}U_{\ell_m}^{k_1,(N)},\\ \frac{1}{\sqrt{N}^{\ell_1}}U_{\ell_1}^{k_2,(N)}, \cdots , 
\frac{1}{\sqrt{N}^{\ell_m}}U_{\ell_m}^{k_2,(N)}, \cdots, \frac{1}{\sqrt{N}^{\ell_1}}U_{\ell_1}^{k_p,(N)}, \cdots , 
\frac{1}{\sqrt{N}^{\ell_m}}U_{\ell_m}^{k_p,(N)}\bigg)
\end{multline*}
converges in law as $N\to +\infty$, to an $mp$-dimensional Gaussian random vector of diagonal 
covariance matrix $\mathrm{diag}(\sigma^{2\ell_i}\phi)$, $i=1,\cdots,m$ repeated $p$ 
times, independent of any finite subset of the sequence $\left(Y_{j}\right)_{j\in \N^*}$.
\end{corollary}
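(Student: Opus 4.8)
The plan is to prove the corollary by the very same method of moments that underlies Lemma~\ref{lem:1}, now carried out jointly for the $p$ starting vectors $e_{k_1},\ldots,e_{k_p}$ rather than a single one. First I would write each coordinate as a sum over paths,
\[
U_{\ell}^{k_a,(N)} = \sum_{j_1,\ldots,j_\ell=1}^N J_{k_a,j_1}^{(N)} J_{j_1,j_2}^{(N)}\cdots J_{j_{\ell-1},j_\ell}^{(N)}\, Y_{j_\ell},
\]
and compute an arbitrary mixed moment of the $mp$-dimensional vector, working conditionally on the $Y_j$ (which are independent of $J$). Expanding, such a moment becomes a sum over a family of paths, one per factor. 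Since the entries of $J^{(N)}$ are centered and i.i.d., the expectation of a product of $J$-factors vanishes unless every distinct ordered pair $(r,c)$ appearing as an index among the factors appears with multiplicity at least two; this is the single combinatorial input already used in the proof of Lemma~\ref{lem:1}.

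I would then transcribe the order-of-magnitude analysis from the Lemma. Recall that for a single component the dominant configurations in a $2n$-th moment are the \emph{retracing} pairings, in which the $2n$ paths are matched into $n$ pairs, the two paths of each pair sharing the same index sequence; each such pairing contributes at the maximal order and, using \eqref{eq:assumptionV0} to replace $\tfrac1N\sum_j Y_j^2$ by $\phi$, reproduces the Gaussian moments with variances $\sigma^{2\ell_i}\phi$, while all other configurations are of strictly smaller order. The one genuinely new point concerns pairings that would match two paths with distinct starting rows $k_a\neq k_b$: their first factors $J_{k_a,\cdot}^{(N)}$ and $J_{k_b,\cdot}^{(N)}$ lie in different rows and can never be identified, so such a pair can never be a retracing pair. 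This breaks the maximal pairing and forces at least one extra index coincidence, lowering the order by at least a factor $N$; hence, after normalization by $\sqrt{N}^{\ell_i}$, every cross-component contribution tends to $0$.

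Consequently the only surviving pairings are those matching paths that share \emph{both} the same starting row $k_a$ and the same exponent $\ell_i$. The limiting mixed moments therefore factor across $a=1,\ldots,p$ and across $i=1,\ldots,m$ and coincide with the moments of a centered Gaussian vector whose covariance is $\mathrm{diag}(\sigma^{2\ell_i}\phi)$ repeated $p$ times. Since a Gaussian law is determined by its moments, the method of moments yields convergence in law of the whole $mp$-vector to this Gaussian. Independence of the limit from any finite subset $\{Y_{j_1},\ldots,Y_{j_q}\}$ follows exactly as in Lemma~\ref{lem:1}: one inserts the factors $Y_{j_1},\ldots,Y_{j_q}$ into the mixed moments and checks that in the limit they decouple from the $U$-factors, again a consequence of the vanishing of the non-retracing contributions.

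The main obstacle is this last clause of the combinatorial estimate: one must verify, uniformly at the level of arbitrary high mixed moments and not merely of covariances, that any pairing identifying two paths with different starting rows is of strictly lower order, and that no compensating family of index configurations restores the leading order. Once this book-keeping is secured, the remainder is a direct repetition of the proof of Lemma~\ref{lem:1}.
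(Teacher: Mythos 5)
Your proposal is correct and follows essentially the same route as the paper, whose entire proof of Corollary~\ref{coro:pcomponents} is the remark that one adapts the proof of Lemma~\ref{lem:1}; your write-up simply carries out that adaptation, correctly identifying the one new combinatorial point (paths starting from distinct rows $k_a\neq k_b$ can never form a retracing pair, so any edge- or vertex-sharing across components merges components and loses a factor of $N$, forcing the limiting moments to factorize across the $p$ blocks).
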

\begin{proof}
It is easy to adapt the proof of Lemma~\ref{lem:1}.
\end{proof}
\begin{proof}[Proof of Lemma~\ref{lem:1}]\ \\
W.l.o.g, we consider the case \(k=1\) and do not show the index \(1\) in the 
	proof, i.e. we write \(U_\ell^{(N)}\) for  \(U_\ell^{1,(N)}\).
  We first prove by the method of moments that $\frac{1}{N^{\ell/2}} U_\ell^{(N)}$ converges in law when $N \to \infty$ toward a centered Gaussian random variable of variance $\sigma^{2\ell}\phi$, i.e. the case $m=1$ of the Lemma. We then sketch the generalization of the proof to the  case $m > 1$.
  
To do this we expand \(U_\ell^{(N)}\) and write
\[
\frac{1}{N^{n\ell/2}} \left( U_\ell^{(N)} \right)^n= \frac{1}{N^{n\ell/2}} 
\sum_{j^1,\cdots,j^n} J_{1,j^1_1}^{(N)}
J_{j^1_1,j^1_2}^{(N)} \cdots J_{j^1_{\ell-1},j^1_\ell}^{(N)} \cdots J_{1,j^n_1}^{(N)} 
J_{j^n_1,j^n_2}^{(N)} \cdots 
J_{j^n_{\ell-1},j^n_\ell}^{(N)}\,Y_{j^1_\ell}\cdots Y_{j^n_\ell},
\]
where $\sum_{j^1,\cdots,j^n}$ means $\sum_{\underset{\underset{j^n_1,\cdots,j^n_\ell 
=1}{\cdot}}{j_1^1,\cdots,j^1_\ell}=1}^N$.

We follow and recall the notations of \cite{agz}: we denote $j^r$ the 
sequence (word) of $\ell+1$ indexes $(1,j^r_1,\cdots,j^r_\ell)$.
To each word $j^r$   we associate its length 
$\ell+1$, its support $supp(j^r)$, the set of different integers in $\{1,\cdots,N\}$ in 
$j^r$, and its weight $wt(j^r)$, the cardinality of  $supp(j^r)$. We also associate the 
graph $G_{j^r}=(V_{j^r},E_{j^r})$ where $V_{j^r}=supp(j^r)$ has  by definition 
$wt(j^r)$ vertices and the edges are constructed by walking through $j^r$ from left to 
right. Edges are oriented if they connect two different indexes. In detail we have
\[
E_{j^r}=\{(1,j_1^r) \cup(j_i^r,j_{i+1}^r), i=1,\cdots \ell-1\}
\]

Each edge $e \in 
E_{j^r}$ has a weight noted $N_e^{j^r}$ which is the number of times it is traversed 
by the sequence $j^r$.

We note $\mathbf{j}$ 
the sentence $(j^1,\cdots,j^n)$ of $n$ words $j^r$, $r=1,\cdots,n$, of 
length $\ell+1$ and we associate to it the 
graph $G_{\mathbf{j}}=(V_{\mathbf{j}},E_{\mathbf{j}})$ obtained by piecing together 
the $n$ graphs $G_{j^r}$. In detail, as with words, we define for a sentence 
$\mathbf{j}$ its support $supp(\mathbf{j})=\cup_{r=1}^n supp(j^r)$ and its weight 
$wt(\mathbf{j})$ as the cardinality of $supp(\mathbf{j})$. We then 
set 
$V_{\mathbf{j}}=supp(\mathbf{j})$ and $E_{\mathbf{j}}$ the set of edges. Edges are 
directed if they connect two different vertices and we have
\[
E_{\mathbf{j}} = \{(1,j^r_1) \cup (j^r_i,j^r_{i+1}  ),\, i=1,\cdots \ell-1,\, r=1,\cdots,n  \}
\]

Two sentences $\mathbf{j}_1$ and $\mathbf{j}_2$ are equivalent, noted $\mathbf{j}_1 
\simeq \mathbf{j}_2$ if there exists a bijection on $\{1,\cdots,N\}$ that maps one into 
the other.

For $e \in E_{\mathbf{j}}$ we note $N_e^{\mathbf{j}}$ the number of times $e$ is traversed by the union of the sequences $j^r$.

By independence of the elements of the matrix $J$, the independence of the $J$s 
and the $Y$s, and by construction of the graph $G_{\mathbf{j}}$ we have
\begin{equation}\label{eq:eqsent}
\E\left[  
\frac{1}{N^{n\ell/2}} \left( U_\ell^{(N)} \right)^n  
\right] =   
\sum_{\mathbf{j}} \frac{1}{N^{n\ell/2}} 
\prod_{e \in E_{\mathbf{j}}} 
\E \left[ 
\left(J_{1,1}^{(N)}\right)^{N_e^{\mathbf{j}}}  \right]  \E[Y_{j^1_\ell}\cdots Y_{j^n_\ell}]
:= \sum_{\mathbf{j}} 
T_{\mathbf{j}}
\end{equation}
In order for $T_{\mathbf{j}}$ to be non zero we need to enforce $N_e^{\mathbf{j}} 
\geq 2$ for all $e \in E_{\mathbf{j}}$. This implies
\[
n \ell = \sum_{e \in E_{\mathbf{j}}} N_e^{\mathbf{j}} \geq 2 \left| E_{\mathbf{j}}  
\right| \geq 2(wt(\mathbf{j})- 1) ,
\]
i.e.
\[
wt(\mathbf{j}) \leq \lfloor n\ell/2 \rfloor+1.
\]
	We now make the following definitions: 
	\begin{definition}\label{def:Wlnt}
		Let $\mathcal{W}_{\ell,n,t}$ be the set of representatives of equivalent classes 
		of 
		sentences $\mathbf{j}$, $\mathbf{j}=(j^1,\cdots,j^n)$ of $n$ words of length 
		$\ell+1$ starting with 1, such that $t=wt(\mathbf{j})$ and  
		$N_e^{\mathbf{j}} \geq 2$ for all $e \in E_{\mathbf{j}}$.
	\end{definition}
	\begin{definition}\label{def:Alnt}
		Let $\mathcal{A}_{\ell,n,t}$ be the set of 
		sentences $\mathbf{j}$, $\mathbf{j}=(j^1,\cdots,j^n)$ of $n$ words of length 
		$\ell+1$ starting with 1, such that $t=wt(\mathbf{j})$ and  
		$N_e^{\mathbf{j}} \geq 2$ for all $e \in E_{\mathbf{j}}$.
              \end{definition}
We now rewrite \eqref{eq:eqsent}
\begin{multline}\label{eq:eqsent1}
\E\left[  
\frac{1}{N^{n\ell/2}} \left( U_\ell^{(N)} \right)^n  
\right] =  
\frac{1}{N^{n\ell/2}} \sum_{t=1}^{\lfloor 
  n\ell/2 \rfloor+1} 
   \sum_{\mathbf{j} \in \mathcal{A}_{\ell,n,t}} \prod_{e \in 
    E_{\mathbf{j}}} \E \left[ \left(J_{1,1}^{(N)}\right)^{N_e^{\mathbf{j}}}  \right]
  \E[Y_{j^1_\ell}\cdots Y_{j^n_\ell}]  \\
= \sum_{t=1}^{\lfloor n\ell/2 \rfloor+1}\sum_{\mathbf{j}' \in \mathcal{W}_{\ell,n,t}} 
\sum_{\mathbf{j} \simeq 
\mathbf{j}'} 
\frac{1}{N^{n\ell/2}} 
\prod_{e \in E_{\mathbf{j}}} 
\E \left[ 
\left(J_{1,1}^{(N)}\right)^{N_e^{\mathbf{j}}}  \right]  \E[Y_{j^1_\ell}\cdots Y_{j^n_\ell}].
\end{multline}
Our assumption on $Y$ ensures that 
\[
\left|\E[Y_{j^1_\ell}\cdots Y_{j^n_\ell}]\right| \leq K 
\]
for a constant \(K\) independent of \(\mathbf{j}\) and $N$.
In addition, 
for \(\mathbf{j} \simeq \mathbf{j}' \), we have
\[
\prod_{e \in E_{\mathbf{j}}} 
\E \left[ 
\left(J_{1,1}\right)^{N_e^{\mathbf{j}}}  \right] 
=
\prod_{e \in E_{\mathbf{j}'}} 
\E \left[ 
\left(J_{1,1}\right)^{N_e^{\mathbf{j}'}}  \right] .
\]
We deduce that for any \(\mathbf{j}' \in \mathcal{W}_{\ell,n,t}\) hence such that 
$wt(\mathbf{j}')=t$,
\[
\left|\sum_{\mathbf{j} \simeq \mathbf{j}'} 
\frac{1}{N^{n\ell/2}} 
\prod_{e \in E_{\mathbf{j}}} 
\E \left[ 
\left(J_{1,1}\right)^{N_e^{\mathbf{j}}}  \right]  \E[Y_{j^1_\ell}\cdots Y_{j^n_\ell}]
\right|
\leq K \frac{C_{N,t}}{N^{n\ell/2}}
\left|
\prod_{e \in E_{\mathbf{j}'}} 
\E \left[ 
\left(J_{1,1}\right)^{N_e^{\mathbf{j}'}}  \right]\right|,
\]
where
\begin{equation}\label{eq:Cnt}
  C_{N,t}=(N-1)(N-2)\cdots(N-t+1)  \simeq  
  N^{t-1}
\end{equation}
is the number of sentences that are equivalent to a given sentence $\mathbf{j}'$, 
with 
weight
$t$ (remember that the 
first element of each word is equal to 1).
Since the cardinality of $\mathcal{W}_{\ell,n,t}$ is independent of $N$, we conclude 
that each term in the right hand side of \eqref{eq:eqsent1} is upper bounded by a 
constant independent of $N$ times the ratio $C_{N,t}/N^{n\ell/2}$. According to 
\eqref{eq:Cnt}, this is equivalent to  $N^{t - n\ell/2-1}$. Therefore, asymptotically in $N$, the 
only relevant term in the right hand side of \eqref{eq:eqsent1} is the one 
corresponding to $t=\lfloor n\ell/2 \rfloor+1$. 

In order to proceed we use the following Lemma whose proof is postponed.
\begin{lemma}\label{lem:nisodd}
If	$\mathcal{W}_{\ell,n,t}
\neq 
\emptyset$ and \(n\) is odd, then \(t \leq \lfloor n\ell/2 \rfloor\).
\end{lemma}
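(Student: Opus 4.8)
The plan is to argue by contradiction: I assume that $n$ is odd and that some sentence $\mathbf{j}\in\mathcal{W}_{\ell,n,t}$ attains the maximal weight $t=\lfloor n\ell/2\rfloor+1$, and I extract from this extremal situation a rigid combinatorial structure that I then contradict by a parity count. First I revisit the chain $n\ell=\sum_{e\in E_{\mathbf{j}}}N_e^{\mathbf{j}}\ge 2|E_{\mathbf{j}}|\ge 2(wt(\mathbf{j})-1)$ that already produced $wt(\mathbf{j})\le\lfloor n\ell/2\rfloor+1$. Since all $n$ words start at the common vertex $1$, the graph $G_{\mathbf{j}}$ is connected, so $|E_{\mathbf{j}}|\ge wt(\mathbf{j})-1$ with equality precisely when $G_{\mathbf{j}}$ is a spanning tree. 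Saturating the bound therefore forces $G_{\mathbf{j}}$ to be a tree on $t$ vertices with every edge traversed exactly twice, except that when $n\ell$ is odd a single edge must be traversed three times in order to absorb the parity defect in $n\ell=2(t-1)+1$.

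The heart of the argument is to exploit that $J^{(N)}$ is \emph{not} symmetric, so that the directed edges $(u,v)$ and $(v,u)$ carry the independent entries $J_{u,v}$ and $J_{v,u}$ and are counted as distinct edges of $G_{\mathbf{j}}$. Having exactly $t-1$ directed edges over a tree with $t-1$ undirected edges means that every undirected tree-edge is used in a single orientation. Rooting the tree at vertex $1$, I would show that this orientation must always point \emph{away} from the root: for an edge separating the root side $A$ from the far side $B$, every word starts in $A$, so the net number of away-minus-toward crossings of that edge, summed over all $n$ words, equals the number of words ending in $B$, which is $\ge 0$; if the surviving orientation were the root-bound one this net would be $\le -2<0$, a contradiction. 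Consequently no word ever steps toward the root, and each word is a strictly descending path of length $\ell$ from vertex $1$ down to a vertex at depth $\ell$.

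With this rigidity in hand the contradiction is a one-line parity count: each of the $n$ words begins with exactly one step along an edge incident to the root, and each such edge is traversed exactly twice, so $n=\sum_{e\ni 1}N_e^{\mathbf{j}}$ is even, contradicting that $n$ is odd. In the residual case where $n\ell$ is odd and one edge is traversed three times, I would note that if that exceptional edge is not incident to the root the same count still yields $n$ even; and if it is incident to the root, I descend to the child $c$ through which the three extra-weighted paths pass and apply the identical count to the edges below $c$, all of multiplicity two, again forcing an even number and a contradiction as soon as $\ell\ge 2$. I expect the main obstacle to be the second paragraph, namely converting the equality case of the weight inequality into the directed-walk picture and proving cleanly that every surviving orientation points away from the root; once that rigidity is secured, the parity bookkeeping is routine.
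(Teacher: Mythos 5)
Your proof is correct where the Lemma is true, and it takes a genuinely different route from the paper's. The paper argues directly: it considers the multiplicities $c_r$ of the first letters $j_1^r$ among the $n$ words, observes that odd $n$ forces some $c_r$ to be odd, and then in each case ($c_1=1$; $c_1=3$ with two subcases; $c_1\geq 5$) extracts enough forced repetitions of specific letters (the letter $1$ occurring at least $n+2$ times, or $j_1^1$ occurring at least $4$ times, etc.) to push the occurrence count of the remaining letters down to $wt(\mathbf{j})\leq \lfloor n\ell/2\rfloor$. You instead work by contradiction from the extremal configuration $t=\lfloor n\ell/2\rfloor+1$ and derive structural rigidity: the underlying graph is a tree, each undirected edge is carried by a single orientation, and all multiplicities equal $2$ (plus a single $3$ when $n\ell$ is odd). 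Your cut argument that every surviving orientation points away from the root is sound (the signed crossing number of each word across the cut is $0$ or $1$, whereas a root-bound orientation would force the total to be at most $-2$), and the resulting count $n=\sum_{e\ni 1}N_e^{\mathbf{j}}$, a sum of even edge multiplicities, is a clean parity contradiction; the descent to the child $c$ in the triple-edge case is also correct. Your approach buys a structural explanation of the extremal case and uses the directedness of the edges in an essential, transparent way; the paper's counting is more elementary but requires an exhaustive case split.

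One further point, which is to your credit rather than a flaw: the residual case you flag (triple edge incident to the root, $\ell=1$) genuinely cannot be closed, because the Lemma as stated is false for $\ell=1$. Take $n=3$, $\ell=1$ and the sentence $\left((1,2),(1,2),(1,2)\right)$: its single edge $(1,2)$ has multiplicity $3\geq 2$, so $\mathcal{W}_{1,3,2}\neq\emptyset$, yet $t=2>1=\lfloor n\ell/2\rfloor$. (The paper's own proof tacitly assumes $\ell\geq 2$ as well: its Case 2 manipulates the second letters $j_2^r$, which do not exist when $\ell=1$.) This does not damage the application: the Lemma is only needed to kill the term $t=\lfloor n\ell/2\rfloor+1$ in \eqref{eq:eqsent1} when $n\ell$ is even, i.e.\ when $n$ is odd and $\ell$ is even, hence $\ell\geq 2$; when $n\ell$ is odd, the crude bound $t\leq\lfloor n\ell/2\rfloor+1$ already gives $C_{N,t}/N^{n\ell/2}=\O(N^{-1/2})\to 0$. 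So your argument covers every case in which the statement is both true and needed; you should simply state the restriction $\ell\geq 2$ explicitly rather than leaving it implicit in the last step.
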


      \noindent
We are ready to complete the proof of Lemma \ref{lem:1}.\\
If $n$ is odd, Lemma \ref{lem:nisodd} shows that $t \leq \lfloor n\ell/2 \rfloor$, so 
that the maximum value of $C_{N,t}$ in \eqref{eq:eqsent1} is $\O( N^{\lfloor n\ell/2 
\rfloor -1})$ and we have $\lim_{N \to \infty}\frac{1}{N^{n\ell/2}} 
\E[(U_\ell^{(N)})^n]=0$. 
If $n=2p$ is even, \eqref{eq:eqsent1} commands
\begin{equation}\label{eq:eqsent2}
  \E\left[ \frac{1}{N^{n\ell/2}} \left( U_\ell^{(N)} \right)^n  \right] \simeq  \frac{1}{N^{p\ell}} 
   \sum_{\mathbf{j} \in \mathcal{A}_{\ell,2p, 
  p\ell +1}} \prod_{e \in 
    E_{\mathbf{j}}} \E \left[ \left(J_{1,1}^{(N)}\right)^{N_e^{\mathbf{j}}}  \right]
  \E[Y_{j^1_\ell}\cdots Y_{j^{2p}_\ell}]
\end{equation}
An element of $\mathcal{A}_{\ell,2p,p\ell+1}$ is a set of $p$ pairs of identical words 
of length $\ell+1$. Each word in a pair contains $\ell+1$ different symbols and the 
intersection of their $p$ supports is equal to $\{1\}$. Thus we have $\E \left[ \left(J_{1,1}\right)^{N_e^{\mathbf{j}}}  \right]=\sigma^{2p\ell}$ for all $\mathbf{j} \in \mathcal{A}_{\ell,2p, p\ell +1}$.

We have
\[
 \E\left[ \frac{1}{N^{n\ell/2}} \left( U_\ell^{(N)} \right)^n  \right] \simeq  \frac{\sigma^{2p\ell}}{N^{p\ell}} 
   \sum_{\mathbf{j} \in \mathcal{A}_{\ell,2p, 
       p\ell +1}}
  \E[Y_{j^1_\ell}\cdots Y_{j^{2p}_\ell}]
\]
There are $(2p-1)!! (= 1.3.\cdots (2p-1))$ 
ways to group the $2p$ words in pairs. Indeed, given a sequence of $p\ell+1$ 
different symbols, we set $q_1=1$ and pick the first $\ell$ symbols in the sequence  
to obtain $j^{q_1}$. We then choose one, say $j^{r_1}$, $r_1 \neq q_1$, among the 
$n-1=2p-1$ remaining words to form the first pair of identical words. We next go to 
the first unpaired word after $j^{q_1}$, say $j^{q_2}$, $q_2 \notin \{q_1,r_1\}$ and 
choose one among the $n-3=2p-3$ remaining words to form the second pair of 
identical words (with the next $\ell$ symbols in the sequence). And this goes on 
until we reach the end, having exhausted the sequence of $p\ell+1$ different 
symbols. It follows that there are $(2p-1)!!$ ways to group the $2p$ indexes in pairs.

For each such grouping and for each $p$-tuple of different indexes $(j^1_\ell,\cdots,j^p_\ell)$ there are $(N-(p+1))(N-(2p+1))\cdots (N-((\ell-1) p+1))$ ways of choosing the remaining $\ell-1$ $p$-tuples of different indexes $(j^1_k,\cdots,j^p_k)$, $k=1,\cdots,\ell-1$. Putting all this together we obtain
\begin{multline*}
  \E\left[ \frac{1}{N^{n\ell/2}} \left( U_\ell^{(N)} \right)^n  \right] \simeq (2p-1)!!\,\sigma^{2p\ell} \frac{1}{N^p}
  \sum_{j^1,\cdots,j^p=2,\text{ all indexes different}}^N \E [Y_{j^1}^2\cdots Y_{j^{p}}^2] 
  \\
 \simeq (2p-1)!! \sigma^{2p\ell} \frac{1}{N^p}
  \sum_{j^1,\cdots,j^p=1}^N \E [Y_{j^1}^2\cdots Y_{j^{p}}^2]  \\
\simeq (2p-1)!!\, \sigma^{2p\ell}  \mathbb{E}\left[
\left(\dfrac{1}{N}\sum_{j=1}^N Y_{j}^2\right)^p\right].
\end{multline*}
By \eqref{eq:assumptionV0} and dominated convergence we obtain
\[
  \lim_{N \to \infty} \E\left[ \frac{1}{N^{n\ell/2}} \left( U_\ell^{(N)} \right)^n  \right] = 
  \begin{cases}
   0 \quad \text{ if } n \text{ odd}\\
   \sigma^{2\ell p}(2p-1)!! \phi^p \quad  \text{ if } n=2p.
  \end{cases}
\]
At this point we have proved that $\frac{1}{N^{\ell/2}} U_\ell^{(N)} $ converges in law 
when $N \to \infty$ to a centered Gaussian of variance $\sigma^{2l}\phi $.

We go on to sketch the proof  that for all $m \in \N^*$ and integers $1 \leq \ell_1 < 
\ell_2 < \cdots < \ell_m$, the $m$-dimensional vector 
$(\frac{1}{N^{\ell_1/2}}U_{\ell_1}^{(N)},\,\frac{1}{N^{\ell_2/2}}U_{\ell_2}^{(N)},\,\cdots,\, 
\frac{1}{N^{\ell_m/2}}U_{\ell_m}^{(N)})$ converges in law toward an $m$-dimensional 
centered Gaussian vector with covariance matrix ${\rm diag}(\sigma^{2\ell_k}\phi)$.

The proof is essentially the same as in the case $m=1$ but the notations become much heavier. The major ingredients are
\begin{enumerate}
\item To each $U_{\ell_k}^{(N),\,}$, $1 \leq k \leq m$ we associate the graphs $G_{\mathbf{j^k}}$, with $\mathbf{j^k}=(j^{k,1},\cdots,j^{k,n_k})$ and $j^{k,r}=\{1,j^{k,r}_1,\cdots,j^{k,r}_{\ell_k}\}$, $r=1,\cdots,n_k$.
\item We piece these graphs together to obtain the graph $G_{\mathbf{j}^1,\cdots,\mathbf{j}^m}$. This allows us to write a formula similar to \eqref{eq:eqsent} for $\E\left[\frac{1}{N^{(n_1\ell_1+\cdots+n_m\ell_m)/2}}\left(U_{\ell_1}^{(N)}\right)^{n_1}\cdots\left(U_{\ell_m}^{(N)}\right)^{n_m}\right]$.
\item We enforce the condition that edges in $G_{\mathbf{j}^1,\cdots,\mathbf{j}^m}$ must have a weight larger than or equal to 2.
\item We generalize the definition \ref{def:Wlnt} to the set noted $\mathcal{W}_{\ell_1,n_1,\cdots,\ell_m,n_m,t}$ and prove an analog to Lemma \ref{lem:nisodd}.
\item These last two steps allow us to write a formula analog to 
  \eqref{eq:eqsent1}.
\item Lemma \ref{lem:nisodd} can be easily generalized to the following
  \begin{lemma}\label{lem:nisodd1}
If	$\mathcal{W}_{\ell_1,n_1,\cdots,\ell_m,n_m,t} \neq \emptyset$ and if any of \(n_1,\cdots,n_m\) is odd, then \(t \leq \lfloor (n_1\ell_1+\cdots+n_m\ell_m)/2 \rfloor\).
\end{lemma}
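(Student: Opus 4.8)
The plan is to mirror the proof of Lemma~\ref{lem:nisodd}, replacing the single word-length $\ell$ by the family $\ell_1<\cdots<\ell_m$ and keeping track of the length class to which each word belongs. Write $L:=n_1\ell_1+\cdots+n_m\ell_m$ for the total number of directed edges read off a sentence $\mathbf{j}\in\mathcal{W}_{\ell_1,n_1,\cdots,\ell_m,n_m,t}$, and recall that the generalized counting of item (5) of the preceding enumeration already yields the crude bound $t\leq\lfloor L/2\rfloor+1$. It therefore suffices to establish the contrapositive of the sharp statement: if the extremal weight $t=\lfloor L/2\rfloor+1$ is attained, then every $n_i$ is even. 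Granting this, the hypothesis that some $n_i$ is odd forces $t\leq\lfloor L/2\rfloor$, as claimed.

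First I would import the structure of extremal sentences coming from the case $m=1$: attaining $t=\lfloor L/2\rfloor+1$ forces $G_{\mathbf{j}}$ to be a tree on $t$ vertices in which every directed edge is traversed exactly twice and no edge occurs in both orientations, so that all edges are oriented away from the root $1$ (an arborescence). Indeed, an edge oriented toward the root would cut off the subtree below it from every walk, and all walks issue from $1$; hence no such edge can be traversed, which is incompatible with $N_e^{\mathbf{j}}\geq 2$. Consequently each of the $n=n_1+\cdots+n_m$ words is a simple directed path descending from the root, of length $\ell_i$ when it belongs to class $i$, and therefore terminates at a vertex at depth exactly $\ell_i$.

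The key step is a pairing argument run by induction on the depth of the tree. Choose a leaf $w$ of maximal depth $D$; its unique incoming edge is traversed exactly twice, and since $w$ is a leaf with no outgoing traversed edge, both traversals belong to words that \emph{end} at $w$. As $G_{\mathbf{j}}$ is a tree there is a single directed path from $1$ to $w$, so these two words coincide; in particular they share the length $D$, hence lie in the same class (here the assumption $\ell_1<\cdots<\ell_m$ is essential, since equal length forces equal class). Deleting this identical pair lowers the multiplicity of each edge on the root-to-$w$ path by two, leaving a strictly smaller arborescence whose nonzero multiplicities are again all equal to $2$; iterating exhausts the $n$ words as identical pairs, each pair confined to a single class. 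Thus each $n_i$ is even, contradicting the oddness hypothesis and proving $t\leq\lfloor L/2\rfloor$.

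The main obstacle is the second step, namely justifying that extremality forces the arborescence structure (each edge traversed exactly twice, oriented away from $1$) and that each word is then a descending path; this is precisely the content behind the case $m=1$, recorded after \eqref{eq:eqsent1} in the identification of extremal sentences with unions of identical word-pairs. Once it is in hand, the multi-length refinement is only a matter of noting, at each pairing, the common length $D$ of the two identified words and invoking the distinctness of the $\ell_i$ to attribute the pair to a single count $n_i$. The remaining transcription—the definitions $\mathcal{W}_{\ell_1,n_1,\cdots,\ell_m,n_m,t}$ and $\mathcal{A}_{\ell_1,n_1,\cdots,\ell_m,n_m,t}$ and the analog of \eqref{eq:eqsent1}—is the routine bookkeeping already sketched in items (1)--(5).
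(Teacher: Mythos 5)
Your route is genuinely different from the paper's: the paper gives no separate proof of this lemma (it is simply declared an easy generalization of Lemma~\ref{lem:nisodd}), and the proof of Lemma~\ref{lem:nisodd} itself is a direct letter-counting argument (the quantities $c_r$, with a case analysis on an odd $c_1$), not an extremal-structure argument. Your contrapositive strategy --- extremality $t=\lfloor L/2\rfloor+1$ forces a tree in which every directed edge has multiplicity exactly $2$ and is oriented away from the root, whence a leaf-by-leaf pairing of the words into identical pairs, each pair confined to a single class because the $\ell_i$ are distinct --- is correct and complete \emph{when $L=n_1\ell_1+\cdots+n_m\ell_m$ is even}: then $2(t-1)=L$ and the chain $2(t-1)\leq 2|E_{\mathbf{j}}|\leq\sum_{e}N_e^{\mathbf{j}}=L$ forces equality throughout, after which your reachability argument (every vertex must be reached by a walk issuing from $1$) and the peeling of deepest leaves go through as you describe.

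The gap is the case $L$ odd, which the hypothesis (some $n_i$ odd) does not exclude. There extremality means $t=(L+1)/2$, and the same count gives a tree with $(L-1)/2$ edges whose multiplicities, all $\geq 2$, sum to the odd number $L$: so \emph{not} every edge is traversed exactly twice --- exactly one edge has multiplicity $3$ --- and your key structural claim fails. Worse, for odd $L$ the contrapositive you aim at (``extremality implies all $n_i$ even'') cannot hold, since all $n_i$ even would make $L$ even; what you would actually need is that extremality is \emph{impossible} for odd $L$, and that is false when some $\ell_i=1$: for $m=1$, $\ell_1=1$, $n_1=3$, the sentence $\bigl((1,a),(1,a),(1,a)\bigr)$ has its single edge of multiplicity $3\geq 2$ and $t=2=\lfloor 3/2\rfloor+1$. (This is in fact a counterexample to the statement as literally written; the paper's own proof of Lemma~\ref{lem:nisodd} tacitly assumes $\ell\geq 2$, since its Case~2 invokes the letters $j_2^r$.) To close your proof you must treat odd $L$ separately: either note that for the moment estimates following \eqref{eq:eqsent1} the crude bound $t\leq\lfloor L/2\rfloor+1$ already gives $C_{N,t}/N^{L/2}\sim N^{t-1-L/2}\leq N^{-1/2}\to 0$ when $L$ is odd, so only even $L$ is ever needed; or, to get the stated inequality whenever all $\ell_i\geq 2$, push the structure further and show that the multiplicity-$3$ edge must emanate from the root and forces a word ending at depth $1$, i.e.\ some $\ell_i=1$. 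The paper's counting argument, by contrast, is insensitive to the parity of $L$.
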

\item Combining all this yields the result
  \begin{multline*}
    \lim_{N \to \infty}\E\left[\frac{1}{N^{(n_1\ell_1+\cdots+n_m\ell_m)/2}}\left(U_{\ell_1}^{(N)}\right)^{n_1}\cdots\left(U_{\ell_m}^{(N)}\right)^{n_m}\right] = \\
    \left\{
      \begin{array}{cc}
        0 & \text{if any of } n_1,\cdots,n_m \text{ is odd}\\
        \prod_{k=1}^m \sigma^{2\ell_k p_k}(2p_k-1)!! \phi^{p_k} & \text{if } 
        n_1=2p_1,\cdots,n_m=2p_m
      \end{array}
      \right.
  \end{multline*}
which shows that the $m$-dimensional vector  $(\frac{1}{N^{\ell_1/2}}U_{\ell_1}^{(N)},\,\frac{1}{N^{\ell_2/2}}U_{\ell_2}^{(N),\,}\cdots,\, \frac{1}{N^{\ell_m/2}}U_{\ell_m}^{(N)})$ converges in law toward an $m$-dimensional centered Gaussian vector with covariance matrix ${\rm diag}(\sigma^{2\ell_k}\phi)$.
\end{enumerate}
This ends the proof of Lemma~\ref{lem:1}. It is central to our approach and allows us 
to establish Theorems~\ref{thm:cv} and \ref{thm:cvnoise} in a  ``syntactic'' manner 
by connecting the stochastic properties of the matrices $J^{(N)}$ and the structure 
of the sequences of indexes that appear when raising them to integer powers.

Note that the proof also shows that this Gaussian vector is independent of any finite subset of the 
sequence $\left(Y_{j}\right)_{j\in \N^*}$.
Indeed, given $k$ distinct integers $j_1,\cdots,j_k$, if we eliminate from the previous construction all words ending with any of these integers, the limits will not change and will be, by construction, independent of $Y_{j_1},\cdots,Y_{j_k}$.
\end{proof}
\begin{proof}[Proof of Lemma~\ref{lem:nisodd}]
	For any \(r\in1,\cdots,n\), consider the number of times the letter \(j_1^r\) appears 
	at this position in the set of \(n\) words
	\[
	c_r:= \sum_{s = 1}^n \mathbbm{1}_{\{j^s_1 = j^r_1\}}.
	\]
	Since the number \(n\) of words is  odd, at least one of the \(c_r\) is also odd.
	Indeed, assume $c_1=2k_1$ is even. Consider the $n-2k_1$ indexes which 
	are not equal to $j_1^1$ and assume w.l.o.g. that one of them is 
	$j_1^{2k_1+1}$. If $c_{2k_1+1}$ is odd, we are done, else let 
	$c_{2k_1+1}=2k_2$ and consider the $n-2k_1-2k_2$ indexes which are not 
	equal to $j_1^{2k_1+1}$ and to $j_1^1$. This process terminates in a finite 
	number of steps and since $n$ is odd we must necessarily either find an 
	odd $c_r$ or end up with a single remaining element corresponding also to 
	an odd $c_r$.

	Assume 
	( w.l.o.g,)  \(c_1\) is odd.
	\begin{enumerate}
		\item[Case 1] If \(c_1 = 1\), there must be another oriented edge \((1,j_1^1)\) in 
		the sentence and it cannot be the first edge of any of the remaining $n-1$ 
		words. It means that at least one of the \(j^s_k\) is equal to \(1\). Hence
		there is an oriented edge  \((j,1)\), and it has to appear at least twice so that  
		there are
		at 	least two \(j^s_k\) equal to \(1\). The total number of letters is $n(\ell+1)$ 
		and we have shown that the letter 1 appeared at least $n+2$ times.
		
		Now, since
		
		\[
		supp(\mathbf{j}) = \{1\} \cup (supp(\mathbf{j}) \backslash\{1\}),
		\]
		and every letter in 
		\(supp(\mathbf{j}) \backslash\{1\}\) 
		has to appear at least twice, we conclude that
		\[
		2 |supp(\mathbf{j}) \backslash\{1\}| \leq n\ell -2,
		\]
		and therefore  \(wt(\mathbf{j}) - 1\leq (n\ell - 2)/2 \).
		
		\item[Case 2] If \(c_1 = 3\), w.l.o.g, we assume that \(j^1_1 = j^2_1 = j^3_1\) 
		\begin{enumerate}
			\item[\textbf{2a}] If \(j^1_2 = j^2_2 = j^3_2\), then we have
			
			\[
			2|supp (\mathbf{j})\backslash\{1,j^1_1,j^1_2\}| 
			\leq n(\ell +1) - \underbrace{n}_{1} -\underbrace{3}_{j_1^1} - 
			\underbrace{3}_{j_2^1},
			\]
			and hence  \(wt(\mathbf{j}) - 3 \leq (n\ell - 6)/2\).
			\item[\textbf{2b}] Otherwise, w.l.o.g., $j_2^1\neq j_2^2$ and $j_2^1 \neq 
			j_2^3$. 
			The 
			edge $(j_1^1,j_2^1)$ must appear twice and hence the letter $j_1^1$ appears 
			at 
			least 4 times, i.e.
			\[
			2(wt(\mathbf{j})-2) \leq n\ell -4
			\]
			
		\end{enumerate}
		
		\item[Case 3] If \(c_1\geq 5\), then \(2(wt(\mathbf{j}) - 2)\leq n\ell-5 \). 
                \end{enumerate}
                	This ends the proof of Lemma~\ref{lem:nisodd}. It is a good example of 
                	the use of our ``syntactic'' approach: it provides an  upper bound on 
                	the number of different symbols in a sentence of $n$ words of indexes 
                	when $n$ is odd which is key in establishing the convergence 
                	properties of the powers of the matrices $J^{(N)}$ when $N \to \infty$.
 \end{proof}
\begin{proof}[Proof of Theorem~\ref{thm:cv}]
Without loss of generality we assume $k=1$.  We first expand the exponential of the 
matrix $J^{(N)}$ 
in~\eqref{eq:Vtsoluceexplicit} (with $\gamma=0$) and express the first coordinate of $V^{(N)}(t)$ as 
\begin{equation}\label{eq:Vt1expand}
V^{1,(N)}(t)=
e^{-\lambda t}\left(V_0^{1}+\sum_{\ell\geq 1}\frac{t^\ell}{\ell!}\frac{1}{\sqrt{N}^\ell}
\sum_{j_1,\cdots,j_\ell=1}^N J_{1,j_1}^{(N)}J_{j_1,j_2}^{(N)}
\cdots J_{j_{\ell-1},j_\ell}^{(N)}V_0^{j_\ell}\right).
\end{equation}
The main idea of the proof is to truncate the infinite sum in the right hand side of \eqref{eq:Vt1expand} to order $n$, establish the limit of the truncated term when $N \to \infty$, obtain the limit of the result when $n \to \infty$, and show that it is the  limit of the non truncated term when $N \to \infty$.

Let $n\in \N^*$ and define the  partial sum \(Z^{(N)}_n(t)\) of order $n$ of \eqref{eq:Vt1expand}
\begin{equation}\label{eq:ZandU}
Z^{(N)}_n(t)=\sum_{\ell=1}^n\dfrac{t^\ell}{\ell ! }\, \dfrac{1}{\sqrt{N}^\ell}U_\ell^{(N)}
\qquad \mbox{ with } \quad U_\ell^{(N)}= \transp{e_1} \left(J^{(N)}\right)^\ell V_0.
\end{equation}
Lemma~\ref{lem:1} with $Y=V_0$ dictates the convergence in law, for all $n\in \N^*$, of the 
vector
\[\left(\dfrac{1}{\sqrt{N}}U_1^{(N)}, \cdots, \dfrac{1}{\sqrt{N}^n}U_n^{(N)}\right)_{N\in N^*}\]
to an $n$-dimensional centered Gaussian random vector, independent of $V_0^1$,  with a diagonal covariance matrix $\Sigma$, such that 
$\forall 1\leq i\leq n$, $\Sigma_{i,i}=\sigma^{2i}\phi_0$. It follows that we have 
the independence between the limits in law of each $\dfrac{1}{\sqrt{N}^\ell}U_\ell^{(N)}$, 
$1\leq \ell\leq n$ and also the convergence in law of the sum $Z_n^{(N)}(t)$ to a 
centered Gaussian random variable $Z_n(t)$, independent of $V_0^1$, of variance 
$\phi_0 \sum_{\ell =1 }^n \frac{(\sigma t)^{2\ell}}{(\ell !)^2}=:\widetilde{I}_{0,n}(2 
\sigma t)$.

Moreover, since the function $\widetilde{I}_{0,n}$ converges pointwise to 
$\widetilde{I}_0$ as \(n\) goes to infinity, the Kolmogorov–Khinchin Theorem 
(see e.g. \cite[Th. 1 p.6]{shiryaev:2019}) gives 
the convergence of \(Z_n(t)\).\\
 \[
 Z_n(t) \mathrel{\substack{\mathcal{L}\\\longrightarrow\\ n \to +\infty}} Z^1(t) \sim \mathcal{N}(0,\phi_0\widetilde{I}_{0}(2\sigma t))
 \]
 It is clear that $Z^{(N)}_n(t) \mathrel{\substack{\mathcal{L}\\\longrightarrow\\ n \to +\infty}} Z^{(N)}(t)$ where $Z^{(N)}(t)=\sum_{\ell\geq 1}\dfrac{t^\ell}{\ell!}U_\ell^{(N)}$,  so that we have 
\begin{figure}[h]
   \centering
   \begin{tikzpicture}[scale=2]
     \node (A) at (0,0) {$Z^{(N)}_n(t)$};
     \node (B) at (2,0) {$Z_{n}(t)$};
     \node (C) at (2,-1) {$Z^1(t)$};
     \node (D) at (0,-1) {$Z^{(N)}(t)$};
     \path[->]
     (A) edge node[above]{$\mathcal{L}$} node[below] {$N\to \infty $} (B)
     (A) edge node [left]{$\mathcal{L}$} node[right] {$n\to \infty $} (D)
     (B)  edge node[left]{$\mathcal{L}$} node[right] {$n\to \infty $} (C);
   \end{tikzpicture}
 \end{figure}\ \\
 It remains  to show that this diagram is commutative i.e. that $Z^{(N)}(t) \mathrel{\substack{\mathcal{L}\\\longrightarrow\\ N \to +\infty}} Z^1(t)$.
       
 According to 
 \cite[Th. 25.5]{billingsley1995}, to obtain the convergence in law of 
 $Z^{(N)}(t)$ to $Z^1(t)$ as $N\to +\infty$,  it is sufficient to show that for 
 all $t\in \R^*$
 \begin{equation}\label{eq:limnlimsupN}
 \lim_{n\to +\infty}\limsup_{N\to +\infty}\P\left[\mid (Z^{(N)}_n(t))-Z^{(N)}(t)\mid\geq \varepsilon\right]=0,\qquad \forall \eps>0.
 \end{equation}
 By Markov inequality, we have 
 \begin{equation}
 \P\left[\mid Z^{(N)}_n(t)-Z^{(N)}(t)\mid\geq \eps \right]\leq \frac{1}{\varepsilon}\E\left[\left| Z^{(N)}_n(t)-Z^{(N)}(t)\right|\right],
 \end{equation}
 and
 \begin{align*}
 \E\left[\mid Z^{(N)}_n(t)-Z^{(N)}(t)\mid\right] &=\E\left[\left|\sum_{\ell\geq n+1}\frac{t^\ell}{\ell!}U_\ell^{(N)}\right|\right]\\
 &\leq \sum_{\ell\geq n+1}\dfrac{t^\ell}{\ell!}\E\left[\left| U_\ell^{(N)}\right|\right].
 \end{align*}
 Moreover, by step 1, $\left(U_\ell^{(N)}\right)_N$ converges in law as $N\to \infty$ to a centered Gaussian random variable $U_\ell$ of variance $\phi_0 \sigma^{2\ell}$. Then  the law of $|U_\ell|$ is a half normal distribution, and hence 
 \begin{align*}
\E\left[|U_\ell|\right]=\dfrac{\sqrt{2\phi_0}}{\sqrt{\pi}}\sigma^{\ell}.
 \end{align*}
 Then, 
 \[
 \limsup_{N\to+\infty}\E\left[\mid Z^{(N)}_n(t)-Z^{(N)}(t)\mid\right]\leq \frac{\sqrt{2\phi_0}}{\sqrt{\pi}}\sum_{\ell\geq n+1}\dfrac{t^\ell}{\ell!}\sigma^\ell.
 \]
 The right hand side goes to zero as $n \to \infty$ and \eqref{eq:limnlimsupN} follows, hence we have obtained the convergence in law of $V^{1,(N)}(t)$ to $e^{-\lambda t}\left[V_0^1+Z^1(t)\right]$, $Z^1(t)$ being independent of $V_0^1$.
 
   In order to prove that the process $(Z^1(t))_{t\in \R_+}$ is Gaussian we show that $aZ^1(t)+bZ^1(s)$ is Gaussian for all reals $a$ and $b$ and all $s,\,t \in \R_+$. But this is clear from \eqref{eq:ZandU} which shows that
   \[
aZ^{(N)}_n(t)+bZ^{(N)}_n(s)=\sum_{\ell=1}^n\dfrac{at^\ell+bs^\ell}{\ell ! }\, \dfrac{1}{\sqrt{N}^\ell}U_\ell^{(N)},
\]
and the previous proof commands that $aZ^{(N)}(t)+bZ^{(N)}(s)$ converges in law when $N \to \infty$ toward a centered Gaussian random variable of variance $\phi_0(a^2\widetilde{I}_0(2\sigma t)+2ab \widetilde{I}_0(2\sigma \sqrt{ts})+b^2\widetilde{I}_0(2\sigma s))$.
\end{proof}
We now make a few remarks concerning the properties of the mean field limit.
\begin{remark}[Decomposition as an infinite sum of independent standard Gaussian variables]
  A consequence of the proof is that $Z^1(t)$ is equal in law to the sum of the following series
\begin{equation}\label{eq:G_tlawassum}
Z^{k}(t)\overset{\L}{=}\sqrt{\phi_0}\sum_{\ell=1}^\infty \dfrac{t^\ell 
	\sigma^\ell }{\ell !}G_\ell^k, \qquad\forall t\in \R_+.
\end{equation}
 where $(G_\ell^k)_{\ell\geq 1,k\geq 1}$ 
 are independent standard Gaussian random variables, independent of the initial condition $V_0^k$.

 This decomposition yields the covariance of $V^{k,(\infty)}$:
 \begin{equation}\label{eq:covVinfty}
\Cov\left[ V^{k,(\infty)}(t),V^{k,(\infty)}(s)  \right] = \phi_0 e^{-\lambda 
 (t+s)}I_0(2\sigma \sqrt{ts}) - e^{-\lambda(t+s)}\mathbb{E}(V_0)^2
 \end{equation}
It also shows that the only sources of randomness in the solution to \eqref{eq:EDS} (when $\gamma=0$) are the initial 
condition and  the family $(G_\ell^k)_{\ell\geq 1,k \geq 1}$ which does not depend on 
time.
The limiting process is thus an $\mathcal{F}_{0^+}$-measurable process. 
\end{remark}
\begin{remark}[Non independent increments]
It follows from the above that the increments of the process $\left(V^{k,(\infty)}(t)\right)_{t\in \R_+}$ are not independent since  for all $0\leq 
t_1<t_2\leq t_3<t_4$, 
\[\Cov\left[V^{k,(\infty)}(t_2)-V^{k,(\infty)}(t_1),V^{k,(\infty)}(t_4)-V^{k,(\infty)}(t_3)\right]\neq 0.\]
\end{remark}
\begin{remark}
Using Theorem~\ref{thm:cv} and \eqref{eq:G_tlawassum} we obtain the SDE satisfied by the process $(V^{k,(\infty)}(t))_{t\in \R_+}$: 
\begin{equation}\label{eq:Vtdiff}
\left\lbrace
\begin{aligned}
\rd V^{k,(\infty)}(t) &=-\lambda V^{k,(\infty)}(t) \rd t +H^{k}(t)\rd t\\
\L(V_0^{k,(\infty)})&=\loiinit.
\end{aligned}
\right.
\end{equation}
where $\left(H^{k}(t)\right)_{t\in \R_+}$ is the centered Gaussian process 
\[
\forall t\geq 0, \quad
H^k(t)=\sqrt{\phi_0} \sigma \sum_{\ell\geq 0}\frac{(\sigma t)^\ell}{\ell!}G_{\ell+1}^k.
\]
The standard Gaussian random variables $G_\ell^k$ have been introduced in  
\eqref{eq:G_tlawassum}. It is easily verified that
\[
\E\left[H^{k}(t)H^{k}(s)\right]=\phi_0\sigma^2 I_0\left(2\sigma\sqrt{ts}\right) (= \phi_0\sigma^2 ( 1 + \widetilde{I}_0(2\sigma\sqrt{ts})) ).
\]
\end{remark}
\begin{remark}[Long time behavior]
The function $\widetilde{I}_0$ behaves as an $\mathcal{O}\left(e^z/\sqrt{2\pi z}\right)$ as $z\to +\infty$. As a consequence of \eqref{eq:covVinfty}  $\sigma = \lambda$ is a critical value for the solution: if $\sigma>\lambda$ 
the solution \(V^1\) blows up when $t \to \infty$ while if $\sigma<\lambda$ it 
converges to its mean.
\end{remark}
\begin{remark}[Non Markov property]
The hypothesis of independence between $J$ and $V_0$ is crucial in the 
proof of Theorem~\ref{thm:cv}. Therefore the proof is not valid if we start the 
system at a time $t_1>0$, in other words, we cannot establish the existence 
of a process $\left(\bar{Z}^{k}(t)\right)_{t\in\R_+}$ with the same 
	law as $\left(Z^{k}(t)\right)_{t\in \R_+}$,  independent of 
$V^{k,(\infty)}(t_1)$, and such that
\begin{equation}\label{eq:partant_t1}
V^{k,(\infty)}(t_1+t)=e^{-\lambda t}\left(V^{k,(\infty)}(t_1)+\bar{Z}^{k}(t)\right).
\end{equation} 
\end{remark}
\subsection{Propagation of chaos}\label{Subsec:chaos}
An important consequence of Lemma~\ref{lem:1} and its Corollary~\ref{coro:pcomponents} is that the propagation of chaos property is satisfied by the mean field limit.
\begin{theorem}\label{thm:propagation1}
	For any finite number of labels $1 \leq k_1< k_2 < \cdots <k_p$,  and for all $t\in \R_+$, 
	the random processes
	 \(V^{k_1,(\infty)}(t)\), \(V^{k_2,(\infty)}(t)\), \(\cdots\), \(V^{k_p,(\infty)}(t)\) are \textbf{independent} and identically distributed.
\end{theorem}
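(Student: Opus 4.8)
The plan is to run the truncation argument from the proof of Theorem~\ref{thm:cv} simultaneously over the $p$ labels $k_1<\cdots<k_p$, replacing Lemma~\ref{lem:1} by its multi-component version Corollary~\ref{coro:pcomponents}, and to read off independence from the \emph{diagonality} of the limiting covariance. I would treat the two assertions separately. The identical distribution is the easy half: by \ref{itm:H1} the coordinates $V_0^{k_1},\dots,V_0^{k_p}$ are i.i.d. of law $\loiinit$, and Theorem~\ref{thm:cv} provides for each $k_i$ a centered Gaussian $Z^{k_i}(t)$ of the \emph{same} variance $\phi_0\widetilde I_0(2\sigma t)$, independent of $V_0^{k_i}$. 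Since $V^{k_i,(\infty)}(t)=e^{-\lambda t}\bigl(V_0^{k_i}+Z^{k_i}(t)\bigr)$ is the same measurable function of a pair $(V_0^{k_i},Z^{k_i}(t))$ whose joint law does not depend on $i$, the $V^{k_i,(\infty)}(t)$ are identically distributed.

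For independence I would first set up the joint convergence. Exactly as in \eqref{eq:Vt1expand}--\eqref{eq:ZandU}, write
\[
V^{k_i,(N)}(t)=e^{-\lambda t}\Bigl(V_0^{k_i}+\sum_{\ell\ge 1}\tfrac{t^\ell}{\ell!}\tfrac{1}{\sqrt N^\ell}U_\ell^{k_i,(N)}\Bigr),
\qquad U_\ell^{k_i,(N)}=\transp{e_{k_i}}\bigl(J^{(N)}\bigr)^\ell V_0 .
\]
Applying Corollary~\ref{coro:pcomponents} with $Y=V_0$ (so that $\phi=\phi_0$) shows that, for every fixed $n$, the family $\bigl(\tfrac{1}{\sqrt N^\ell}U_\ell^{k_i,(N)}\bigr)_{1\le\ell\le n,\,1\le i\le p}$ converges in law to a centered Gaussian vector with diagonal covariance, independent of any finite subset of $(V_0^{j})_j$. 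For a jointly Gaussian vector, a diagonal covariance matrix is equivalent to mutual independence of the coordinates; in particular the limiting variables attached to distinct labels $k_i$ are independent. Forming the truncated sums $Z_n^{k_i,(N)}(t)=\sum_{\ell=1}^n\tfrac{t^\ell}{\ell!}\tfrac{1}{\sqrt N^\ell}U_\ell^{k_i,(N)}$, this yields joint convergence, as $N\to\infty$, of $(Z_n^{k_1,(N)}(t),\dots,Z_n^{k_p,(N)}(t))$ to \emph{mutually independent} centered Gaussians $(Z_n^{k_1}(t),\dots,Z_n^{k_p}(t))$, the whole vector being independent of $V_0^{k_1},\dots,V_0^{k_p}$.

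It then remains to pass to the limit $n\to\infty$ and to remove the truncation in $N$. I would reproduce componentwise the estimate of the proof of Theorem~\ref{thm:cv}: the Markov-inequality bound on $\E\bigl|Z_n^{k_i,(N)}(t)-Z^{k_i,(N)}(t)\bigr|$ (via the half-normal moments of the $U_\ell$) gives the $\lim_n\limsup_N$ control \eqref{eq:limnlimsupN} for each coordinate, hence for the $p$-dimensional vector, so that $(Z^{k_1,(N)}(t),\dots,Z^{k_p,(N)}(t))$ converges in law to $(Z^{k_1}(t),\dots,Z^{k_p}(t))$. Independence is stable under these operations: for each $n$ the joint characteristic function of $(Z_n^{k_i}(t))_i$ factorizes, so the limit $(Z^{k_i}(t))_i$ has mutually independent coordinates and is independent of $(V_0^{k_i})_i$. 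Since the $V_0^{k_i}$ are themselves mutually independent by \ref{itm:H1}, the $p$ pairs $(V_0^{k_i},Z^{k_i}(t))$ are mutually independent, and applying the map $(x,z)\mapsto e^{-\lambda t}(x+z)$ coordinatewise shows that $V^{k_1,(\infty)}(t),\dots,V^{k_p,(\infty)}(t)$ are independent.

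I expect the main obstacle to be purely a matter of bookkeeping rather than of new ideas: one must verify that the diagonal covariance in Corollary~\ref{coro:pcomponents} genuinely separates \emph{both} the power index $\ell$ \emph{and} the label index $k_i$, i.e. that the off-diagonal cross-moments between two distinct labels vanish in the limit. This is where the sentence-counting estimate underlying Lemma~\ref{lem:1} and Lemma~\ref{lem:nisodd} must be re-examined: a surviving (non-vanishing) term requires every edge to be traversed at least twice, which for words anchored at two distinct starting letters $k_i\neq k_{i'}$ forces the words to pair \emph{within} each label, killing any mixed contribution and producing exactly the zero off-diagonal entries. Checking that the joint truncation error estimate survives uniformly over the $p$ labels is routine once this separation is established.
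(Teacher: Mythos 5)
Your proposal is correct and takes essentially the same route as the paper: the paper's own proof is a one-line appeal to Corollary~\ref{coro:pcomponents} together with the truncation argument from the proof of Theorem~\ref{thm:cv}, which is exactly what you carry out (diagonal covariance of the jointly Gaussian limit across labels giving independence, then the $n\to\infty$ and $N\to\infty$ limits preserving it). Your write-up merely makes explicit the bookkeeping the paper leaves implicit.
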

\begin{proof}
  The proof follows directly from  Corollary~\ref{coro:pcomponents} and the proof of Theorem~\ref{thm:cv}.
\end{proof}

\section{Convergence of the particle system with an additive Brownian noise (\(\gamma\neq 0\))}\label{Sec:Noise}
In this section, we consider the general equation \eqref{eq:eds_matrix} with $\gamma \neq 0$.
\subsection{Mean field limit}
Here, the randomness is not entirely determined at time $t=0$, so we expect that the \mbox{$\mathcal{F}_{0^+}$-measurable} 
property of the limit which is satisfied by the mean field limit of 
Section~\ref{Sec:MainResult- without noise case} is no longer true. 
Considering $\gamma\neq 0$, the explicit solution of~\eqref{eq:eds_matrix} is 
\begin{equation}\label{eq:explicit_solution_MB}
V^{(N)}(t)=e^{-\lambda t} 
\exp\left(\dfrac{J^{(N)}t}{\sqrt{N}}\right)\left[V_0^{(N)}+\gamma\int_0^t e^{\lambda 
s}\exp\left(-\dfrac{J^{(N)}s}{\sqrt{N}}\right)\rd B(s)\right].
\end{equation}

\begin{theorem}\label{thm:cvnoise}
Under  hypotheses {\rm \ref{itm:H1} and  \ref{itm:H2}} on the matrix $J$ and the 
initial random vector $V_0^{(N)}$, for each $k\in \N^*$, $V^{k,(N)}(t)$  converges in 
law as $N\to +\infty$  to
\begin{equation}
V^{k,(\infty)}(t):= e^{-\lambda t}\left[V_0^{k}+\gamma \int_0^t e^{\lambda s}\rd B^{k}(s)+
Z^k(t)+\gamma A^k(t)
\right].
\end{equation}

The process $(Z^k(t))_{t\in \R_+}$ is the same centered Gaussian process as in Theorem~\ref{thm:cv}. It is independent of the initial condition $V_0^k$, of the Brownian motion $(B^k(t))_{t\in \R_+}$ and of the process $A^k(t)$. The process $(A^k(t))_{t\in \R_+}$ is a centered Gaussian process, also independent of $V_0^k$ and of $(B^k(t))_{t\in \R_+}$.
Its covariance writes
\[
\E\left[ A^1(t)A^1(s)  \right]=  \int_0^s e^{2\lambda u} \widetilde{I}_0(2\sigma\sqrt{(t-u)(s-u)}) \, d u\quad 0 \leq s \leq t
\]
\end{theorem}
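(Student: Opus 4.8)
The plan is to start from the explicit solution \eqref{eq:explicit_solution_MB}, which after combining the two matrix exponentials reads
\[
V^{k,(N)}(t)= e^{-\lambda t}\,\transp{e_k}\exp\!\left(\frac{J^{(N)}t}{\sqrt{N}}\right)V_0^{(N)} + \gamma e^{-\lambda t}\int_0^t e^{\lambda s}\,\transp{e_k}\exp\!\left(\frac{J^{(N)}(t-s)}{\sqrt{N}}\right)\rd B(s).
\]
The first summand is exactly the $\gamma=0$ solution, so Theorem~\ref{thm:cv} already gives its convergence to $e^{-\lambda t}[V_0^k+Z^k(t)]$. Expanding the exponential in the second summand and isolating the $\ell=0$ term, I would write the noise contribution as $\gamma e^{-\lambda t}\big(\int_0^t e^{\lambda s}\rd B^k(s)+A^{k,(N)}(t)\big)$, where
\[
A^{k,(N)}(t)=\sum_{\ell\geq 1}\frac{1}{\ell!}\,\frac{1}{\sqrt{N}^{\ell}}\int_0^t e^{\lambda s}(t-s)^\ell\,\transp{e_k}(J^{(N)})^\ell\rd B(s).
\]
The $\ell=0$ term depends neither on $N$ nor on $J$ and is simply carried to the limit; everything then reduces to proving the joint convergence in law of the $\gamma=0$ part and of $A^{k,(N)}(t)$ at finitely many times, together with the asserted independence.

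For the core convergence I would run a moment method in the spirit of Lemma~\ref{lem:1}, but applied jointly to the initial-condition walks $\transp{e_k}(J^{(N)})^\ell V_0$ and the Brownian walks $\int_0^t e^{\lambda s}(t-s)^\ell\transp{e_k}(J^{(N)})^\ell\rd B(s)$ across several times. Since $B$ is independent of $(J,V_0)$, I would condition on $(J,V_0)$ and take the $B$-expectation first: the Brownian factors are conditionally centered Gaussian, so Wick's formula expresses any of their joint moments as a sum over pairings, each pair contributing, via the It\^o isometry, a bilinear form
\[
\frac{1}{\sqrt{N}^{\,\ell+\ell'}}\int_0^{s\wedge t} e^{2\lambda u}(t-u)^\ell(s-u)^{\ell'}\sum_{j}\big[(J^{(N)})^\ell\big]_{k,j}\big[(J^{(N)})^{\ell'}\big]_{k,j}\,\rd u.
\]
Two structural facts then identify the limit. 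First, any mixed moment containing an odd number of Brownian factors vanishes under the conditional $B$-expectation, so at every finite $N$ the initial-condition walks are uncorrelated with the Brownian walks; with joint Gaussianity in the limit this yields the independence of $Z^k$ and $A^k$. Second, averaging the bilinear form above over $J$ by the same graph/sentence counting as in Lemmas~\ref{lem:1} and~\ref{lem:nisodd} gives $\E_J\big[N^{-(\ell+\ell')/2}\sum_j[(J^{(N)})^\ell]_{k,j}[(J^{(N)})^{\ell'}]_{k,j}\big]\to \delta_{\ell\ell'}\,\sigma^{2\ell}$, while configurations with an odd walk or unmatched lengths are of lower order in $N$ and drop out.

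Summing the surviving diagonal terms, the limiting covariance of $A^{k,(N)}(t)$ and $A^{k,(N)}(s)$ (take $0\le s\le t$) is obtained term by term as
\[
\int_0^s e^{2\lambda u}\sum_{\ell\geq 1}\frac{\sigma^{2\ell}}{(\ell!)^2}\big((t-u)(s-u)\big)^\ell\,\rd u=\int_0^s e^{2\lambda u}\widetilde{I}_0\big(2\sigma\sqrt{(t-u)(s-u)}\big)\,\rd u,
\]
using the defining series of $\widetilde{I}_0$ in \eqref{eq:I0tilde}, which is the claimed formula. The independence statements are then read off directly: $A^{k,(N)}$ is a functional of $(J^{(N)},B)$ only and $Z^k$ of $(J,V_0)$ only, while $V_0$ and $B$ are mutually independent and independent of $J$, so $A^k\perp V_0^k$ and $Z^k\perp B$ come for free, and $Z^k\perp A^k$ is the vanishing cross-covariance plus joint Gaussianity. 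The only remaining point, $A^k\perp B^k$, follows exactly as the finite-subset independence in Lemma~\ref{lem:1}: the contribution of the single index $j=k$ to the Brownian walks is $\ell!^{-1}N^{-\ell/2}[(J^{(N)})^\ell]_{k,k}$, which tends to $0$, so the limit $A^k$ is independent of any finite subset of $\{B^j\}$, in particular of the $B^k$ retained by the $\ell=0$ term.

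Finally I would pass from the truncations to the full series as in the proof of Theorem~\ref{thm:cv}: for each fixed $n$ the partial sum $\sum_{\ell=1}^n$ converges in law by the finite-dimensional moment computation above, and to exchange $n\to\infty$ with $N\to\infty$ I would invoke \cite[Th.~25.5]{billingsley1995}, which requires the uniform tail estimate $\lim_{n\to\infty}\limsup_{N\to\infty}\E\big[\big|\sum_{\ell>n}\frac{1}{\ell!}N^{-\ell/2}\int_0^t e^{\lambda s}(t-s)^\ell\transp{e_k}(J^{(N)})^\ell\rd B(s)\big|\big]=0$. This follows from a conditional second-moment bound: by the It\^o isometry and the counting above, the $L^2$-norm of the $\ell$-th term is $O\big(\sigma^\ell t^{\ell+1/2}e^{\lambda t}/\ell!\big)$ uniformly in $N$, with summable tail. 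The main obstacle is precisely the combined It\^o-isometry/Wick/graph bookkeeping of the second paragraph: one must verify that introducing the Brownian walks creates no new leading-order $J$-configurations beyond the diagonal matched pairs, and that the uniform integrability underlying the $L^1$ tail bound holds; both rest on the boundedness of the entries of $J$ (hypothesis~\ref{itm:H2}) and reproduce, with heavier indexing, the syntactic analysis already carried out for $\gamma=0$.
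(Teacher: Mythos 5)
Your proposal is correct in outline and follows the paper's proof skeleton quite closely: the same decomposition of \eqref{eq:explicit_solution_MB} into the $\gamma=0$ part plus the Brownian part, the same truncation in $\ell$, a joint moment/graph-counting argument for the finite-dimensional convergence and the independence statements, the same covariance algebra yielding $\int_0^s e^{2\lambda u}\widetilde{I}_0(2\sigma\sqrt{(t-u)(s-u)})\,\rd u$, and the same interchange of $n\to\infty$ and $N\to\infty$ via Billingsley's Theorem 25.5. The one genuine difference is how the moment method is deployed on the noise term. The paper never conditions on $J$ and never invokes Wick's formula: it observes that for fixed $\ell$ and $t$ the stochastic integrals $Y_j^\ell(t)=\int_0^t\Lambda_\ell(t,s)\,\rd B^j(s)$, $j\in\N^*$, form an independent sequence, independent of $J$, satisfying the law-of-large-numbers hypothesis \eqref{eq:assumptionV0} with limit $\phi_\ell(t)$ as in \eqref{eq:phil}; hence Lemma~\ref{lem:1} (extended to unbounded $Y_j$ by Remark~\ref{rem:ind}, and to the mixed $2n$-dimensional vector of its Part 2 by a ``slight modification'') applies essentially as a black box, the Brownian walks having exactly the same algebraic form $\transp{e_1}\left(J^{(N)}\right)^\ell Y$ as the initial-condition walks. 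You instead condition on $(J,V_0)$, integrate out $B$ by Wick/It\^o, and are left with $J$-expectations of bilinear forms $N^{-(\ell+\ell')/2}\sum_j[(J^{(N)})^\ell]_{k,j}[(J^{(N)})^{\ell'}]_{k,j}$. This buys you the two-time covariance directly (the paper obtains it only indirectly, through Gaussianity of the linear combinations $aA^1(t)+bA^1(s)$), but it forces you to redo the sentence/graph counting on new objects — pairs of walks glued at a common endpoint — which is precisely the step you flag as the main obstacle, and which the paper's formulation sidesteps by hiding the $B$-randomness inside the end-letters $Y_j$. One caveat, shared with the paper rather than a point against you: your claim that the $L^2$-norm of the $\ell$-th term is $O(\sigma^\ell t^{\ell+1/2}e^{\lambda t}/\ell!)$ \emph{uniformly in} $N$ (needed to pull $\limsup_{N}$ inside the tail sum) is asserted rather than proved, exactly as the paper's own tail estimate passes $\limsup_N$ through an infinite sum and through $\E|\cdot|$ without a uniform-integrability argument.
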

\begin{proof}
  Without loss of generality we assume $k=1$.
  
The proof is in two parts. We first assume $\loiinit=\delta_0$ and show that, starting 
from $0$  (which implies that only the Brownian part of \eqref{eq:Vtsoluceexplicit} 
acts) the process converges in law to $\gamma\int_0^t e^{-\lambda (t-s)} \rd 
B^1(s)+\gamma e^{-\lambda t} A^1(t)$. 
In the second part we remove the condition $\nu_0 = \delta_0$.

\noindent
\textbf{Part 1: }Let $\loiinit=\delta_0$, then  the explicit solution of \eqref{eq:eds_matrix} is given by
\begin{equation}\label{eq:eds_gamma_nonzero}
V^{(N)}(t)=\gamma  \int_0^t e^{-\lambda (t- 
s)}\exp\left(\dfrac{J^{(N)}}{\sqrt{N}}(t-s)\right)\rd B(s).
\end{equation}
As before, we expand the exponential of $J$ and obtain
\begin{equation}\label{eq:weapplyLambdaon}
V^{(N)}(t)=\gamma e^{-\lambda t}\left[\int_0^te^{\lambda s}\rd B(s)+ \sum_{\ell\geq 1} 
\int_0^{t}e^{\lambda s}\left(\dfrac{\left(J^{(N)}\right)^\ell}{\sqrt{N}^\ell \, \ell 
!}(t-s)^\ell\right)\rd B(s)\right].
\end{equation}
For all $\ell \geq 1$, we introduce the $C^1$ function $\Lambda_\ell$ defined by
\begin{equation}\label{eq:Lambdal}
  \Lambda_\ell(t,s)=\frac{1}{\ell !}e^{\lambda s}(t-s)^\ell,
\end{equation}
and write
\[
  V^{(N)}(t)=\gamma e^{-\lambda t}\left[\int_0^te^{\lambda s}\rd B(s)+ \sum_{\ell\geq 
  1} \int_0^{t}\dfrac{\left(J^{(N)}\right)^\ell}
  {\sqrt{N}^\ell}\Lambda_\ell(t,s)\rd B(s)\right].
\]
We focus on the first coordinate of $V^{(N)}(t)$ and we introduce, for all $\ell\geq 
1$, the following notations.
\begin{equation}\label{eq:Utilde}
  U_\ell^{(N)}(t):=
   \transp{e_1}\left(J^{(N)}\right)^\ell \, \int_0^t \Lambda_\ell(t,s)d B(s).
\end{equation}
Then, we have
\[ V^{1,(N)}(t)=\gamma e^{-\lambda t} \left[\int_0^t e^{\lambda s}\rd B^1(s) + 
\sum_{\ell\geq 1} \frac{1}{\sqrt{N}^\ell} U^{(N)}_\ell(t)\right].\]
We next define
\[
A_n^{(N)}(t) :=  \sum_{\ell=1}^n \frac{1}{\sqrt{N}^\ell} U^{(N)}_\ell(t)
\]
For each $\ell \geq 1$ and $t \in \R_+$ the sequence $(Y^\ell_j(t))_{j \in \N^*}$, $Y^\ell_j(t)=\int_0^t \Lambda_\ell(t,s) dB^j(s)$, satisfies \eqref{eq:assumptionV0}. Indeed, by the law of large numbers
\begin{equation}\label{eq:phil}
  \lim_{N \to \infty} \frac{1}{N}\sum_{j=1}^N (Y^\ell_j(t))^2  = \E\left[\left( \int_0^t 
  \Lambda_\ell(t,s)\, dB^1(s)  \right)^2   \right]=
\int_0^t (\Lambda_\ell(t,s))^2 ds  := \phi_\ell(t)
\end{equation}
Moreover, for each $\ell \geq 1$, for each $t \geq 0$, the $Y^\ell_j(t)$ are independent centered Gaussian variables.

It follows from Remark \ref{rem:ind} and a slight modification of the proof of Lemma~\ref{lem:1} that the $n$-dimensional vector $\left(\frac{1}{\sqrt{N}}U_1^{(N)}(t)\cdots  \frac{1}{\sqrt{N}^n}U_n^{(N)}(t)  \right)$ converges in law to an $n$-dimensional centered Gaussian process with diagonal covariance matrix ${\rm diag}(\sigma^{2\ell} \phi_\ell(t)) $, $\ell=1,\cdots,n$. This process is independent of any finite subset of the Brownians $B^j$, hence of $\int_0^t e^{\lambda s}\rd B^1(s)$. It follows that $A_n^{(N)}(t)$ converges in law toward a centered Gaussian process $A_n(t)$ of variance $\sum_{\ell=1}^n \sigma^{2\ell} \phi_\ell(t)$.

Because $\sum_{\ell=1}^n \sigma^{2\ell}\phi_\ell(t)$ converges to $\sum_{\ell \geq 1} \sigma^{2\ell}\phi_\ell(t)$ and $A_n(t)$ is a centered Gaussian process, the Kolmogorov–Khinchin Theorem 
 commands that $A_n(t)$ converges in law when $n \to \infty$ to a centered 
 Gaussian process $A^1(t)$ independent of the Brownian $B^1$, with covariance 
 $\Lambda^2(t):=\sum_{\ell \geq 1} \sigma^{2\ell}\phi_\ell(t)$.
By \eqref{eq:Lambdal} and \eqref{eq:I0tilde} we have
\[
\Lambda^2(t) = \int_0^t e^{2\lambda u} \widetilde{I}_0(2\sigma(t-u)) \, d u,
\]
It remains to prove that $A^{(N)}(t)$ converges in law to $A^1(t)$.
 According to 
 \cite[Th. 25.5]{billingsley1995}, to obtain the weak convergence of 
 $A^{(N)}(t)$ to $A(t)$ as $N\to +\infty$,  it is sufficient to show that for 
 all $t\in \R_+$
 \begin{equation}\label{eq:limnlimsupN_bis}
 \lim_{n\to +\infty}\limsup_{N\to +\infty}\P\left[\mid A_n^{(N)}(t)-A^{(N)}(t)\mid\geq \varepsilon\right]=0,\qquad \forall \eps>0.
 \end{equation}
 By Markov inequality, we have 
 \[
 \P\left[\mid A_n^{(N)}(t)-A^{(N)}(t)\mid\geq \eps \right]\leq \frac{1}{\varepsilon}\E\left[\left| A_n^{(N)}(t)-A^{(N)}(t)\right|\right],
 \]
 and
 \[
 \E\left[\mid A_n^{(N)}(t)-A^{(N)}(t)\mid\right]\leq  \sum_{\ell\geq n+1}\E\left[\left|\frac{1}{\sqrt{N}^l} {U}_\ell^{(N)}(t)\right|\right].
\]
 We know from the beginning of the proof that for all  $t\in \R_+$ and for all $\ell\in 
 \N^*$, , 
 \[
   \lim_{N\to +\infty}\frac{1}{\sqrt{N}^l} U_\ell^{(N)}(t):= U_\ell(t) \overset{\mathcal{L}}{=} \mathcal{N}(0,\sigma^{2\ell} \phi_\ell(t))
 \]
  Then  the law of $|{U}_\ell(t)|$ for all $t \in \R_+$  is a half normal distribution, and 
 \begin{align*}
\E\left[|{U}_\ell(t)|\right]=\sqrt{\dfrac{2}{\pi}}\sigma^{\ell}\left( \phi_\ell(t) \right)^{1/2}
 \end{align*}
 Then, 
 \[
   \limsup_{N\to+\infty}\E\left[\left| A_n^{(N)}(t)-A^{(N)}(t)\right|\right] \leq  
   \frac{1}{\varepsilon}\sqrt{\dfrac{2}{\pi}} \sum_{\ell > n} \sigma^{\ell}\phi_{\ell}(t)
 \]
 The right hand side of this inequality goes to zero when $n \to \infty$ and \eqref{eq:limnlimsupN_bis} follows. This concludes the first part of the proof.

 In order to prove that the process $(A^1(t))_{t\in \R_+}$ is Gaussian we proceed 
 exactly as in the proof of Theorem~\ref{thm:cv} and show that $aA^1(t)+bA^1(s)$ is 
 Gaussian for all reals $a$ and $b$ and all $s < t \in \R_+$.  Indeed, the 
 previous proof commands that $aA^{(N)}(t)+bA^{(N)}(s)$ converges in law when 
 $N \to \infty$ toward a centered Gaussian random variable of variance 
 $a^2\Lambda^2(t)+2ab \Lambda(t,s)+b^2\Lambda^2(s)$, where
   \[
\Lambda(t,s) =  \int_0^s e^{2\lambda u} \widetilde{I}_0(2\sigma\sqrt{(t-u)(s-u)}) \, d u\quad 0 \leq s \leq t  
\]

 \textbf{Part 2: }

 We remove the assumption $\nu_0 = \delta_0$. A slight modification of the proof of Lemma~\ref{lem:1} shows that
 the $2n$-dimensional vector
 \[
\left(\frac{1}{\sqrt{N}}U_1^{(N)},\cdots,\frac{1}{\sqrt{N}^n}U_n^{(N)},\frac{1}{\sqrt{N}}U_1^{(N)}(t),\cdots,\frac{1}{\sqrt{N}^n}U_n^{(N)}(t)\right),
\]
where $U_\ell^{(N)}$ is defined by \eqref{eq:ZandU} and  $U_\ell^{(N)}(t)$ by \eqref{eq:Utilde}, converges in law when $N \to \infty$ to the $2n$-dimensional centered Gaussian vector with covariance ${\rm diag}(\sigma^{2}\phi_0,\cdots,\sigma^{2n}\phi_0,\sigma^{2}\phi_1,\cdots,\sigma^{2n}\phi_n)$,
where $\phi_\ell$, $\ell \geq 1$ is defined by \eqref{eq:phil}.

We conclude that $Z_n^{(N)}(t)+A_n^{(N)}(t)$ converges in law when $N \to \infty$ 
to $Z_n(t)+A_n(t)$, and that $Z_n(t)$ and $A_n(t)$ are independent. The 
convergence of $Z_n(t)+A_n(t)$ to $Z^1(t)+A^1(t)$ follows again from the 
Kolmogorov–Khinchin Theorem.
\end{proof}
\begin{remark}
Note that the process $A^1(t)$ does not have independent increments.
\end{remark}
\subsection{Propagation of chaos}
As in the case without noise, propagation of chaos occurs.
\begin{theorem}\label{thm:prop_chaos_noise}
For any finite number of labels $k_1< k_2 < \cdots <k_p$,  and for all $t\in \R_+$, the random processes 
\(V^{k_1,(\infty)}(t)\), \(V^{k_2,(\infty)}(t)\), \(\cdots\), \(V^{k_p,(\infty)}(t)\) 
are \textbf{independent} and identically distributed.
\end{theorem}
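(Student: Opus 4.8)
The plan is to reproduce, for several labels at once, the argument of Theorem~\ref{thm:cvnoise}, and to read off the asymptotic independence from a diagonal limiting covariance, exactly as Theorem~\ref{thm:propagation1} did in the noiseless case via Corollary~\ref{coro:pcomponents}. Fix $t\in\R_+$ and distinct labels $k_1<\cdots<k_p$. For each $i$ I would expand the $k_i$-th coordinate of \eqref{eq:explicit_solution_MB} as in the proof of Theorem~\ref{thm:cvnoise}, writing
\[
V^{k_i,(N)}(t)=e^{-\lambda t}\left[V_0^{k_i}+\gamma\int_0^t e^{\lambda s}\,\rd B^{k_i}(s)+Z_n^{k_i,(N)}(t)+\gamma A_n^{k_i,(N)}(t)+R_n^{k_i,(N)}(t)\right],
\]
where $Z_n^{k_i,(N)}(t)$ and $A_n^{k_i,(N)}(t)$ are the order-$n$ partial sums built from the $V_0$-based variables $U_\ell^{k_i,(N)}$ of \eqref{eq:ZandU} and from the Brownian-based variables $U_\ell^{k_i,(N)}(t)$ of \eqref{eq:Utilde}, and $R_n^{k_i,(N)}(t)$ is the tail remainder. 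The half-normal/Markov estimate already used in the proofs of Theorems~\ref{thm:cv} and~\ref{thm:cvnoise} bounds each remainder uniformly in $N$ and lets the limits $n\to\infty$ and $N\to\infty$ be exchanged through \cite[Th. 25.5]{billingsley1995}.

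The core step is the joint convergence, as $N\to\infty$, of the $2np$-dimensional vector
\[
\left(\frac{1}{\sqrt{N}^\ell}U_\ell^{k_i,(N)},\ \frac{1}{\sqrt{N}^\ell}U_\ell^{k_i,(N)}(t)\right)_{1\le\ell\le n,\ 1\le i\le p}
\]
toward a centered Gaussian vector with \emph{diagonal} covariance. This is Corollary~\ref{coro:pcomponents} (which already handles the several labels for the $V_0$-based block) combined with the two-block variant of Lemma~\ref{lem:1} used in Part~2 of the proof of Theorem~\ref{thm:cvnoise} (which pairs each $U_\ell^{(N)}$ with its Brownian analogue $U_\ell^{(N)}(t)$). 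The only genuinely new point is that cross-covariances between two distinct labels $k_i\ne k_{i'}$ vanish in the limit. This is the ``syntactic'' weight count: a mixed second moment either factorizes into two independent single-root first moments, when the two words share no $J$-edge, each vanishing by the odd-$n$ part of Lemma~\ref{lem:1}, or else forces the two words to share an edge, in which case the two \emph{distinct} fixed roots $k_i$ and $k_{i'}$ hold two vertices fixed and cost one extra power of $N$ relative to the diagonal same-root case; in both cases the contribution is $o(1)$.

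Granting this diagonal joint limit, I would assemble the four independent building blocks of each component. The initial values $V_0^{k_1},\dots,V_0^{k_p}$ are i.i.d. under $\loiinit^{\otimes N}$ by~\ref{itm:H1}; the integrals $\gamma\int_0^t e^{\lambda s}\,\rd B^{k_i}(s)$ are independent Gaussians since the $B^{k_i}$ are independent Brownian motions; and the limiting processes $Z^{k_i}(t)$, $A^{k_i}(t)$ are jointly Gaussian, mutually independent across $i$ by the diagonal covariance just obtained, and independent of the $V_0$'s and of the Brownian motions by the independence statements recorded in Lemma~\ref{lem:1} and in the proof of Theorem~\ref{thm:cvnoise}. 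Within a single label the four blocks are independent by Theorem~\ref{thm:cvnoise}. Since independence is preserved under the continuous map $(x,y,z,w)\mapsto e^{-\lambda t}(x+y+z+w)$ and under passage to the limit, the random variables $V^{k_1,(\infty)}(t),\dots,V^{k_p,(\infty)}(t)$ are independent. They are identically distributed because the construction is symmetric in the labels: the entries of $J^{(N)}$ are i.i.d. and the families $(V_0^{k_i})_i$ and $(B^{k_i})_i$ are exchangeable, so Theorem~\ref{thm:cvnoise} produces the same law for every $k$. Running the same joint convergence at finitely many times $t_1,\dots,t_q$ upgrades the statement from one-time marginals to the full processes.

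The main obstacle is precisely the off-diagonal (cross-label and cross-type) decorrelation: establishing that the limiting covariance matrix is diagonal requires extending the counting argument behind Lemma~\ref{lem:nisodd} and Corollary~\ref{coro:pcomponents} to sentences with several distinct roots and to the mixing of the $V_0$-based and Brownian-based families. Everything else in the proof is bookkeeping layered on top of results already proved.
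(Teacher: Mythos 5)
Your proposal is correct and follows essentially the same route as the paper: the paper's proof consists precisely of the joint convergence you identify as the core step, stated as Corollary~\ref{coro:pcomponentswithB} (the $2mp$-dimensional extension of Corollary~\ref{coro:pcomponents} concatenating the $V_0$-based block \eqref{eq:ZandU} and the Brownian-based block \eqref{eq:Utilde} over several labels, with diagonal limiting covariance), followed by the assembly argument of Theorem~\ref{thm:cv}. Your sketch of the cross-label and cross-type decorrelation and of the passage to finitely many times merely fleshes out what the paper compresses into ``follows from the one of Lemma~\ref{lem:1}.''
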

\begin{proof}
   The proof follows directly from the following extension of Corollary~\ref{coro:pcomponents} and the proof of Theorem~\ref{thm:cv}.
  \begin{corollary}\label{coro:pcomponentswithB}
Under the same assumptions as in Lemma~\ref{lem:1} (see also Remark~\ref{rem:ind}),
 for all integers $p > 1$ and  $1 \leq k_1 < k_2 < \cdots < k_p$ the $2mp$-dimensional vector obtained by concatenating the two $mp$-dimensional vectors
 \begin{multline*}
\big(\frac{1}{\sqrt{N}^{\ell_1}}U_{\ell_1}^{k_1,(N)}, \cdots , 
\frac{1}{\sqrt{N}^{\ell_m}}U_{\ell_m}^{k_1,(N)},\\ \frac{1}{\sqrt{N}^{\ell_1}}U_{\ell_1}^{k_2,(N)}, \cdots , 
\frac{1}{\sqrt{N}^{\ell_m}}U_{\ell_m}^{k_2,(N)}, \cdots, \frac{1}{\sqrt{N}^{\ell_1}}U_{\ell_1}^{k_p,(N)}, \cdots , 
\frac{1}{\sqrt{N}^{\ell_m}}U_{\ell_m}^{k_p,(N)}\bigg),
\end{multline*}
where $U_{\ell_j}^{k_i,(N)}=\transp{e_{k_i}}J^{\ell_j}V_0$, $i=1,\cdots,p$, $j=1,\cdots,m$  and
 \begin{multline*}
\bigg(\frac{1}{\sqrt{N}^{\ell_1}}U_{\ell_1}^{k_1,(N)}(t), \cdots , 
\frac{1}{\sqrt{N}^{\ell_m}}U_{\ell_m}^{k_1,(N)}(t),\\ \frac{1}{\sqrt{N}^{\ell_1}}U_{\ell_1}^{k_2,(N)}(t), \cdots , 
\frac{1}{\sqrt{N}^{\ell_m}}U_{\ell_m}^{k_2,(N)}(t), \cdots, \frac{1}{\sqrt{N}^{\ell_1}}U_{\ell_1}^{k_p,(N)}(t), \cdots , 
\frac{1}{\sqrt{N}^{\ell_m}}U_{\ell_m}^{k_p,(N)}(t)\bigg),
\end{multline*}
where $ U_{\ell_j}^{k_i,(N)}(t):=
   \transp{e_{k_i}}J^{\ell_j} \, \int_0^t \Lambda_{\ell_j}(t,s)d B(s)$,  $i=1,\cdots,p$, $j=1,\cdots,m$, 
converges in law as $N\to +\infty$, to an $2mp$-dimensional Gaussian random vector of diagonal 
covariance matrix $\mathrm{diag}(\sigma^{2\ell_i}\phi_0)$, $i=1,\cdots,m$ repeated $p$ times, and $\mathrm{diag}(\sigma^{2\ell_i}\phi_{\ell_i}(t))$, $i=1,\cdots,m$ repeated $p$ times.
\end{corollary}
\begin{proof}
Follows from the one of Lemma~\ref{lem:1}.
\end{proof}
\end{proof}

\section*{Acknowledgements}
This project/research has received funding from the European Union’s Horizon 2020 
Framework Programme for Research and Innovation under the Specific Grant 
Agreement No. 945539 (Human Brain Project SGA3) and the Specific Grant 
Agreement No. 785907 (Human Brain Project SGA2).


\begin{thebibliography}{10}

\bibitem{agz}
G.~Anderson, A.~Guionnet and O.~Zeitouni.
\newblock An Introduction to Random Matrices 
\newblock (Cambridge Studies in Advanced Mathematics). Cambridge University 
Press
\newblock  \url{doi:10.1017/CBO9780511801334}, 2009.


\bibitem{ben-arous-guionnet:95}
G.~Ben-Arous and A.~Guionnet.
\newblock Large deviations for {L}angevin spin glass dynamics.
\newblock {\em Probability Theory and Related Fields}, 102(4):455--509, 1995.

\bibitem{billingsley1995}
P.~Billingsley.
\newblock {\em Probability and measure. Wiley series in probability and
  mathematical statistics}.
\newblock Wiley New York, 1995.

\bibitem{crisanti-sompolinsky:87}
A.~Crisanti and H.~Sompolinsky.
\newblock {Dynamics of spin systems with randomly asymmetric bonds: Langevin
  dynamics and a spherical model}.
\newblock {\em Physical Review A}, 36(10):4922--4939, 1987.

\bibitem{guionnet:95}
A.~Guionnet.
\newblock {\em Dynamique de Langevin d'un verre de spins}.
\newblock PhD thesis, Universit\'e de Paris Sud, 1995.

\bibitem{guionnet:97}
A.~Guionnet.
\newblock {Averaged and quenched propagation of chaos for spin glass dynamics}.
\newblock {\em Probability Theory and Related Fields}, 109(2):183--215, 1997.

\bibitem{hopfield:82}
J.~J. Hopfield.
\newblock Neural networks and physical systems with emergent collective
  computational abilities.
\newblock {\em Proc. National Academy of Sciences, USA}, 79:2554--2558, 1982.

\bibitem{hopfield:84}
 J.~J. Hopfield.
\newblock Neurons with graded response have collective computational properties
  like those of two-state neurons.
\newblock {\em Proceedings of the National Academy of Sciences, USA},
  81(10):3088--3092, 1984.

\bibitem{hopfield-tank:86}
J.~J. Hopfield and D.~W.~Tank.
\newblock Computing with neural circuits- a model.
\newblock {\em Science}, 233(4764):625--633, 1986.

\bibitem{moynot-samuelides:02}
O.~Moynot and M.~Samuelides.
\newblock {Large deviations and mean-field theory for asymmetric random
  recurrent neural networks}.
\newblock {\em Probability Theory and Related Fields}, 123(1):41--75, 2002.


\bibitem{shiryaev:2019}
A.N.~Shiryaev.
\newblock {Probability. 2},
Graduate Texts in Mathematics (95),
\newblock {\em Springer, New York}, 2019.


\bibitem{sompolinsky-crisanti-etal:88}
H.~Sompolinsky, A.~Crisanti, and HJ~Sommers.
\newblock {Chaos in Random Neural Networks}.
\newblock {\em Physical Review Letters}, 61(3):259--262, 1988.

\bibitem{sompolinsky-zippelius:82}
H.~Sompolinsky and A.~Zippelius.
\newblock {Relaxational dynamics of the Edwards-Anderson model and the
  mean-field theory of spin-glasses}.
\newblock {\em Physical Review B}, 25(11):6860--6875, 1982.


\end{thebibliography}
\end{document}